\newcommand
{\Dm}{\mathrm{Dom}}
\newcommand
{\spn}{\mathrm{span}}
\newcommand
{\tr}{\mathrm{tr}}
\newcommand
{\R}{{\mathbf R}}
\newcommand
{\ir}{\mathrm{i}}
\newcommand
{\Cc}{{\mathbf C}}
\newcommand
{\B}{\mathcal{H}}
\newcommand
{\Bc}{\mathcal{B}}
\newcommand
{\X}{\mathcal{T}}
\newcommand
{\V}{\mathcal{O}}
\newcommand
{\Rc}{\mathcal{R}}
\newcommand
{\p}{\mathrm{supp}}
\newcommand
{\Dis}{\mathrm{dist}}
\newcommand
{\ms}{\mathrm{mes}}
\newcommand
{\x}{\mathcal {M}}
\newcommand
{\Ec}{\mathcal{E}}
\newtheorem{theorem}{Theorem}[section]
\newtheorem{proposition}[theorem]{Proposition}
\newtheorem{lemma}[theorem]{Lemma}
\newtheorem{corollary}[theorem]{Corollary}
\theoremstyle{definition}
\newtheorem{definition}[theorem]{Definition}
\theoremstyle{remark}
\newtheorem{remark}[theorem]{Remark}
\numberwithin{equation}{section}
\begin{document}
	
\title[Holomorphy with respect to coupling constant] {Criterion of holomorphy with respect\\ to a coupling constant of
continuous
\\functions of a perturbed self-adjoint\\ operator}

\author[L. Zelenko]
{ Leonid Zelenko} 

\address{%
	Department of Mathematics \\
	University of Haifa  \\
	Haifa 31905  \\
	Israel}
\email{zelenko@math.haifa.ac.il}

\begin{abstract}
Sufficient and necessary conditions
on the spectral measure of a self-adjoint operator $A$, acting in a Hilbert space $\B$, are obtained,
under which for any continuous scalar function the operator function $\phi(A+\gamma B)$ is holomorphic with rspect to the coupling constant $\gamma$ in a neighborhood of $\gamma=0$, where $B$ is a self-adjoint operator. The sharpest results are obtained in the case where $B$ is a rank-one operator.
\end{abstract}

\subjclass{Primary 47A55, 47A56; \\Secondary 81Q15}

\keywords{Perturbation of a linear operator, function of a linear \\operator,
 holomorphic operator function}

\maketitle

\tableofcontents

\section{Introduction}\label{sec:introduction}
\setcounter{equation}{0}

In \cite{Ar-Z1} sufficient and necessary conditions
on the spectral measure of a self-adjoint operator $A$, acting in a Hilbert space $\B$, were obtained,
under which
any continuous function $\phi$ (without any additional smoothness
properties) has a directional operator - derivative
$$
\phi^{\prime}(A)(B):= \frac{\partial}{\partial \gamma}
\; \phi(A+\gamma B)_{|\gamma = 0}
$$
in the direction of a quite general bounded, self-adjoint operator $B$.
Here the operator function $\phi(A+\gamma B)$ is defined on the real
axis $\R$.
The sharpest results in \cite{Ar-Z1} were obtained in the case
where $B$ is a rank-one operator.
It turned out that in this case the sufficient condition for the existence
of the directional derivative for any continuous
function $\phi$ is the membership of the Borel transform
of the spectral measure of the operator A to the Hardy class $H_\infty$ (BTB property). Furthermore, in this case the directional derivative is expressed via the commutator of the multiplication operator $M_\phi$ on the function $\phi$ with the Hilbert transform $H$.

It is shown in the present paper by means of Friedrix method \cite{F} that the BTB property is sufficient and {\it necessary} for the existence of the holomorphic continuation
of the operator function $\phi(A+\gamma B)$ from an interval
$(-\delta,\delta)$ to the open disk
$D_\delta=\{\gamma\in\Cc\;\vert\;|\gamma|<\delta\}$ for any continuous
function $\phi$. Moreover, under this condition
there exists a family of unitary operators
$U_\gamma\;(\gamma\in(-\delta,\delta))$, which is a real-analytic operator
function on $(-\delta,\delta)$ and establishes the unitary equivalence
between $A+\gamma B$ and $A$ for each fixed $\gamma\in(-\delta,\delta)$.
 Let us notice that perturbations considered in this paper belong to the class of so called {\it gentle perturbations}, studied in \cite{F} and \cite{Rei}.

\section{Notations}\label{sec:Notations}

\vskip 0.3truecm


$\R$ and $\Cc\;$ are the fields of real and
complex numbers;

\noindent $\Re z$ and $\Im z$ are the real and the imaginary parts
of a number $z\in\Cc$;

\noindent$\V(x_0)$ is a neighborhood of a point $x_0$ belonging to
a topological space $\X$;

\noindent$\ms(A)$ is the Lebesgue measure of a measurable set
$A\subset \R$;

\noindent$\hat f$
is the Fourier transform of a function $f$ from $L_1(\R)$
or from $L_2(\R)$;

\noindent$\p(f)$ is the support of a function $f$;

\noindent If $\rho $ is a measure on $\R$, then $\p(\rho)$ denotes its support;

\noindent$M_\phi$ is the operator of multiplication by a function $\phi(t)$
on $\R$;

\noindent$\|\cdot\|_\Ec$ is the norm of  a Banach space $\Ec$;

\noindent$C[a,b]$ is the Banach algebra of continuous functions on
$[a,b]$ with the supremum norm;

\noindent$(\cdot,\cdot)_\B$ and $\|\cdot\|_\B$ are the inner product and
the norm in a Hilbert space $\B$.

\noindent If it is clear what
Hilbert space is meant, we shall simply write $(\cdot,\cdot)$ and
$\|\cdot\|$;

\noindent $\spn(\x)$ is the closure of the linear span of a subset $\x$ of
a Hilbert space $\B$;

\noindent For Banach spaces $E$ and $F$, $\Bc(E, F)$ is the Banach space of
all bounded linear operators from $E$ into $F$.

\noindent $\Bc(\B)=\Bc(\B, \B)$ is the Banach algebra of
bounded linear operators acting in a Hilbert space $\B$.

\vskip 0.3truecm

\noindent If $A$ is a linear operator
acting in a Hilbert space $\B$, then:
\vskip2mm

\noindent $\sigma(A)$ and $\sigma_e(A)$ are the spectrum and
essential spectrum
of $A$;

\noindent $\sigma_d(A)=\sigma(A)\setminus\sigma_e(A)$; if $A$ is self-adjoint,
$\sigma_d(A)$ consists of isolated eigenvalues of finite multiplicity;

\noindent$R_\lambda(A)\;(\lambda\notin\sigma(A))$ is the resolvent of $A$, i.e.,
$R_\lambda(A)=(A-\lambda I)^{-1}$;

\noindent$\tr(A)$ is the
trace of a linear operator $A$ belonging to the trace class.

\section{Properties of Holomorphic Unitary Equivalence,\\
Functional Holomorphy and Complex Local Isospectrality}
\label{sec:properties}
\setcounter{equation}{0}

In this section we consider self-adjoint bounded operators $A$ and $B$
acting in a Hilbert
space $\B$. Consider also a perturbed operator
$A_\gamma=A+\gamma B$, where $\gamma$ is a real coupling constant.
First we introduce some definitions. First of all, notice that the
holomorphy of an operator function
$F:\;U\rightarrow\Bc(\B)$ ($U\subseteq\Cc)$, $U$ is open) in the
uniform operator topology is equivalent to its holomorphy in
the weak operator topology (see \cite{D-Sc}). Hence we shall use
simply the term "holomorphic operator function".

\subsection{Definitions}

\begin{definition}\label{dfhlunt}
We say that the pair of operators $A,\;B$ has the property of
Holomorphic Unitary Equivalence (briefly-HUE property), if there
exists $\delta>0$ and a family of unitary operators
$\{U_\gamma|\}_{\gamma\in(-\delta,\delta)}$ such that
\begin{equation}\label{uneq}
\forall\;\gamma\in(-\delta,\delta):\;A_\gamma=(U_\gamma)^{-1}A
U_\gamma
\end{equation}
and the operator function $U_\gamma$
admits a holomorphic continuation from $(-\delta,\delta)$ to the
open disk $D_\delta=\{\gamma\in\Cc\;\vert\;|\gamma|<\delta\}$.
\end{definition}

\begin{definition}\label{dfFH}
We say that the pair of operators $A,\;B$ has the property of
Functional Holomorphy
(briefly- FH  property),
if
there exists $\delta>0$ such that for any continuous function
$\phi:\;\R\rightarrow\Cc$ the operator function
$\phi(A_\gamma)$
admits a holomorphic continuation $\Phi (\gamma)$ from
$(-\delta,\delta)$ to the open disk
$D_\delta=\{\gamma\in\Cc\;\vert\;|\gamma|<\delta\}$.
\end{definition}

\begin{definition}\label{dfLSS}
We say that the pair of operators $A,\;B$ has the property of
Complex Local Isospectrality (briefly- CLI property), if there
exists $\delta>0$ such that
\begin{equation}\label{spceq}
\forall\;\gamma\in D_\delta:\quad \sigma(A_\gamma)=\sigma(A).
\end{equation}
\end{definition}

\subsection{Relation between the above properties}

\begin{proposition}\label{pHUEimpl}
The HUE property of the pair $A,\;B$ implies its FH and CLI
properties.
\end{proposition}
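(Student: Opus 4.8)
The plan is to transport both the continuous functional calculus of $A$ and its spectrum through the complexified conjugating family; the only genuine subtlety is that the holomorphic continuation of $U_\gamma$ to $D_\delta$ need not consist of unitary — or even invertible — operators, so one first has to shrink the disk. Concretely, let $\widetilde U_\gamma$ denote the holomorphic continuation of $U_\gamma$ to $D_\delta$. Since $\widetilde U_0=U_0$ is unitary, hence invertible, since the set of invertible operators is open in $\Bc(\B)$, and since $\gamma\mapsto\widetilde U_\gamma$ is norm-continuous, there is $\delta'\in(0,\delta]$ such that $\widetilde U_\gamma$ is invertible for every $\gamma\in D_{\delta'}$; moreover $\gamma\mapsto\widetilde U_\gamma^{-1}$ is holomorphic on $D_{\delta'}$, because operator inversion is holomorphic on the open set of invertible operators and holomorphy is preserved under composition. (Recall also that a product of holomorphic $\Bc(\B)$-valued functions is holomorphic, since $\Bc(\B)$ is a Banach algebra.) Note that $\delta'$ depends only on the family $U_\gamma$, not on any later choice of $\phi$.

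For the FH property, fix a continuous $\phi:\R\to\Cc$ and put $\Phi(\gamma):=\widetilde U_\gamma^{-1}\,\phi(A)\,\widetilde U_\gamma$, which is then holomorphic on $D_{\delta'}$. For real $\gamma\in(-\delta',\delta')$ I would check that $\Phi(\gamma)=\phi(A_\gamma)$: for any unitary $U$ the map $\psi\mapsto U^{-1}\psi(A)U$ is a continuous unital $*$-homomorphism of $C(\sigma(A))$ into $\Bc(\B)$ that sends the identity function to $U^{-1}AU$ and agrees with the polynomial calculus $p\mapsto p(U^{-1}AU)$ on polynomials, so, by the uniqueness of the continuous functional calculus (and $\sigma(U^{-1}AU)=\sigma(A)$), it coincides with $\psi\mapsto\psi(U^{-1}AU)$; taking $U=U_\gamma$ and $\psi=\phi$ and invoking \eqref{uneq} gives $U_\gamma^{-1}\phi(A)U_\gamma=\phi(U_\gamma^{-1}AU_\gamma)=\phi(A_\gamma)$. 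Hence $\Phi$ is the desired holomorphic continuation of $\phi(A_\gamma)$ to $D_{\delta'}$, and the pair $A,B$ has the FH property with $\delta'$ in place of $\delta$.

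For the CLI property, observe that $\gamma\mapsto A_\gamma=A+\gamma B$ and $\gamma\mapsto\widetilde U_\gamma^{-1}A\,\widetilde U_\gamma$ are both holomorphic $\Bc(\B)$-valued functions on $D_{\delta'}$ which, by \eqref{uneq}, coincide on the interval $(-\delta',\delta')$; since this interval has accumulation points in the domain $D_{\delta'}$, the identity theorem for holomorphic Banach-space-valued functions forces $A_\gamma=\widetilde U_\gamma^{-1}A\,\widetilde U_\gamma$ for all $\gamma\in D_{\delta'}$. Thus for every $\gamma\in D_{\delta'}$ the operator $A_\gamma$ is similar to $A$ via the bounded invertible operator $\widetilde U_\gamma$, whence $\sigma(A_\gamma)=\sigma(A)$, which is \eqref{spceq} with $\delta'$ in place of $\delta$. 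The main (and essentially only) obstacle is precisely this loss of unitarity upon complexification: one cannot simply invoke ``unitarily equivalent operators have equal spectra'' on $D_\delta$, but must descend to a subdisk on which $\widetilde U_\gamma$ is merely invertible, after which plain similarity suffices; the remaining ingredients — uniqueness of the continuous functional calculus and the identity theorem — are routine.
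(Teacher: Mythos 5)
Your proposal is correct and follows essentially the same route as the paper: pass to the holomorphic continuation of $U_\gamma$, shrink the disk so that it stays invertible, extend the intertwining relation \eqref{uneq} to the subdisk by the identity theorem to get similarity (hence CLI), and obtain FH from $\phi(A_\gamma)=U_\gamma^{-1}\phi(A)U_\gamma$. Your write-up merely supplies more detail than the paper at two points it treats as evident — the justification of $U_\gamma^{-1}\phi(A)U_\gamma=\phi(A_\gamma)$ via uniqueness of the continuous functional calculus, and the holomorphy of $\gamma\mapsto\widetilde U_\gamma^{-1}$.
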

\begin{proof}
Let $V_\gamma$ be the holomorphic continuation of the operator
function $U_\gamma$ from $(-\delta, \delta)$ to the open disk
$D_\delta$. Then there exists $\sigma\in(0,\delta]$ such that for
each $\gamma\in D_\sigma$ operator $V_\gamma$ is continuously
invertible. Then, by the principle of holomorphic continuation,
(\ref{uneq}) implies that $A_\gamma=(V_\gamma)^{-1}A V_\gamma$ for
any $\gamma\in D_\sigma$, that is the operator $A_\gamma$ is
similar to $A$ for these values of $\gamma$. This fact implies the
CLI property of the pair $A,\;B$. The FH property follows from the
evident equality $\phi(A_\gamma)=(V_\gamma)^{-1}\phi(A) V_\gamma$
which is valid for any continuous function
$\phi:\;\R\rightarrow\Cc$. The proposition is proven.
\end{proof}

Before formulating the next theorem, we need the following
lemma, which is a slight generalization of Proposition 3.5 from
\cite{Ar-Z1}.
\begin{lemma}\label{linc}
Let A and B be linear operators acting in a Hilbert space $\B$,
$A$ is self-adjoint and B is compact. Assume that for some $\delta>0$
\begin{equation}\label{spcinc}
\forall\;\gamma\in D_\delta:\quad \sigma(A_\gamma)\subseteq\sigma(A).
\end{equation}
Then the property (\ref{spceq}) is valid.
\end{lemma}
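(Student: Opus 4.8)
The plan is to show that both the essential spectrum $\sigma_e(A)$ and the discrete spectrum $\sigma_d(A)=\sigma(A)\setminus\sigma_e(A)$ are contained in $\sigma(A_\gamma)$ for every $\gamma\in D_\delta$; since $\sigma(A)=\sigma_e(A)\cup\sigma_d(A)$, this combined with the hypothesis (\ref{spcinc}) immediately yields (\ref{spceq}). The essential part is the easy one: because $B$ is compact, $\gamma B$ is compact for each $\gamma\in\Cc$, and the essential spectrum — understood as the set of $\lambda$ for which $A_\gamma-\lambda I$ fails to be Fredholm — is invariant under compact perturbations, so $\sigma_e(A_\gamma)=\sigma_e(A)$ for all $\gamma$; since $A$ is self-adjoint this Fredholm essential spectrum coincides with $\sigma_e(A)$ in the sense used above, and $\sigma_e(A_\gamma)\subseteq\sigma(A_\gamma)$ always. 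Hence $\sigma_e(A)\subseteq\sigma(A_\gamma)$ for every $\gamma\in D_\delta$.

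For the discrete part, fix $\lambda_0\in\sigma_d(A)$ and choose a circle $\Gamma$ centred at $\lambda_0$ whose radius is small enough that the closed disk bounded by $\Gamma$ meets $\sigma(A)$ only in $\lambda_0$; in particular $\Gamma$ is disjoint from $\sigma(A)$, and by (\ref{spcinc}) it is therefore disjoint from $\sigma(A_\gamma)$ for every $\gamma\in D_\delta$. Consequently the Riesz projection
$$
P_\gamma=\frac{1}{2\pi\ir}\oint_\Gamma R_\lambda(A_\gamma)\,d\lambda
$$
is well defined for every $\gamma\in D_\delta$, and since for each fixed $\lambda\in\Gamma$ the map $\gamma\mapsto R_\lambda(A_\gamma)$ is holomorphic on $D_\delta$ (expand $(I+(\gamma-\gamma_0)R_\lambda(A_{\gamma_0})B)^{-1}$ in a Neumann series), with the estimates uniform in $\lambda\in\Gamma$, the family $\{P_\gamma\}_{\gamma\in D_\delta}$ is a holomorphic family of bounded idempotents. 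At $\gamma=0$ the projection $P_0$ is nonzero, being the Riesz projection attached to the nonempty spectral set $\{\lambda_0\}\subseteq\sigma(A)$ (for self-adjoint $A$ it is the orthogonal projection onto $\ker(A-\lambda_0 I)$).

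Now I would invoke the standard fact that two idempotents lying at operator-norm distance less than $1$ are similar, hence of equal rank; by continuity of $\gamma\mapsto P_\gamma$ this makes $\gamma\mapsto\mathrm{rank}\,P_\gamma$ locally constant, hence constant on the connected disk $D_\delta$. Therefore $\mathrm{rank}\,P_\gamma=\mathrm{rank}\,P_0\ge 1$, so $P_\gamma\neq 0$ for every $\gamma\in D_\delta$. If $\lambda_0$ belonged to the resolvent set of $A_\gamma$, then — since no other point of $\sigma(A_\gamma)\subseteq\sigma(A)$ lies in the disk bounded by $\Gamma$ — the integrand $R_\lambda(A_\gamma)$ would be holomorphic in $\lambda$ on that whole disk and the Cauchy integral $P_\gamma$ would vanish, a contradiction. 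Hence $\lambda_0\in\sigma(A_\gamma)$, and since $\lambda_0\in\sigma_d(A)$ was arbitrary, $\sigma_d(A)\subseteq\sigma(A_\gamma)$. Combining this with the inclusion for the essential spectrum gives $\sigma(A)\subseteq\sigma(A_\gamma)$, which together with (\ref{spcinc}) is exactly (\ref{spceq}).

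I expect the only genuinely delicate point to be the global constancy of $\mathrm{rank}\,P_\gamma$ over the whole disk $D_\delta$ rather than merely in a neighbourhood of $\gamma=0$. This is where the hypothesis (\ref{spcinc}) really enters: it guarantees that the contour $\Gamma$ stays in the resolvent set of $A_\gamma$ for all $\gamma\in D_\delta$, so that the Riesz projection $P_\gamma$ is defined — and depends holomorphically on $\gamma$ — on the connected set $D_\delta$, and a local-constancy-plus-connectedness argument then does the rest.
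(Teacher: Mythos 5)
Your proposal is correct and follows essentially the same route as the paper: reduce to the discrete spectrum via Weyl's theorem on the invariance of $\sigma_e$ under compact perturbation, then show the Riesz projection over a small contour around each $\lambda_0\in\sigma_d(A)$ remains nonzero throughout the connected disk $D_\delta$. The only cosmetic difference is that you track the rank of $P_\gamma$ via similarity of nearby idempotents, while the paper tracks its trace as a continuous integer-valued function; these are the same invariant.
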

\begin{proof}
Since $B$ is compact, then
by H. Weyl Theorem (\cite{Kat}, Chapter IV, \S 5, $n^o$ 6,
Theorem 5.35),
\begin{equation}
\label{sgme}
\forall\;\gamma\in\Cc:\quad
\sigma_e(A_\gamma)=\sigma_e(A).
\end{equation}
Then, in view of (\ref{spcinc}), in order to prove (\ref{spceq}),
it is enough to show that
\begin{equation}
\label{dlta}
\sigma_d(A)\subseteq \sigma_d(A_\gamma)\quad
\forall \gamma\in D_\delta.
\end{equation}
Take $\lambda_0\in \sigma_d(A)$. Then, in view of
(\ref{spcinc}), there exist$(-\delta,\delta)$s a neighborhood
$D_\epsilon(\lambda_0)=\{\lambda\in\Cc\;\vert\;|
\lambda-\lambda_0|<\epsilon\}$ of the point $\lambda_0$,
such that
$(D_\epsilon(\lambda_0)\setminus\{\lambda_0\})
\cap\sigma(A_\gamma)=\emptyset$ for any
$\gamma\in D_\delta$
Since the function
\begin{equation*}
T(\gamma)=-\frac{1}{2\pi \mathrm{i}}
\tr\left(\oint_{|\lambda-\lambda_0|=\frac{\epsilon}{2}}R_\lambda(A_\gamma)\,
\mathrm{d}\lambda\right)
\end{equation*}
is continuous and takes only non-negative integer values and $T(0)>0$,
then $T(\gamma)>0$ for any
$\gamma\in (D_\delta$.
This means that for these values of
$\gamma$ the point $\lambda_0$ belongs to $\sigma_d(A_\gamma)$. So,
(\ref{dlta}) is valid, hence (\ref{spceq})
is valid too.
\end{proof}

We now turn to the main result of this section.

\begin{theorem}\label{tFHLSC}
Let $A,\;B$ be a pair of bounded self-adjoint operators acting in
a Hilbert space $\B$. If this pair has the FH property and $B$ is
compact, then it has the CLI property.
\end{theorem}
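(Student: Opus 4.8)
The plan is to use the FH property to show that $\sigma(A_\gamma)\subseteq\sigma(A)$ for every $\gamma$ in the disk $D_\delta$ supplied by that property, and then to invoke Lemma~\ref{linc}: its hypotheses hold because $A$ is self-adjoint and, for every $\gamma\in\Cc$, the operator $\gamma B$ is compact. The crucial feature of FH to be exploited is that a single $\delta>0$ works for \emph{all} continuous $\phi$ at once; applied to (truncations of) the resolvent functions $t\mapsto(t-\lambda_0)^{-1}$ together with the identity theorem, this turns a resolvent identity which is a priori valid only on a tiny, $\lambda_0$-dependent real interval into one valid on the whole fixed disk $D_\delta$ — something ordinary analytic perturbation theory cannot deliver, since there the radius of validity shrinks as $\lambda_0$ approaches $\sigma(A)$.

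Concretely, I would fix $\lambda_0\in\Cc\setminus\sigma(A)$ and choose a bounded continuous function $\phi_{\lambda_0}:\R\to\Cc$ that coincides with $t\mapsto(t-\lambda_0)^{-1}$ on some bounded open neighbourhood $N\subseteq\R$ of the compact set $\sigma(A)$ with $\lambda_0\notin\overline N$; such a $\phi_{\lambda_0}$ exists by the Tietze extension theorem (and for non-real $\lambda_0$ one may simply take $\phi_{\lambda_0}(t)=(t-\lambda_0)^{-1}$, $N=\R$). Since the continuous functional calculus of a bounded self-adjoint operator depends only on the restriction of the function to the spectrum, $\phi_{\lambda_0}(A)=R_{\lambda_0}(A)$, so that $(A-\lambda_0 I)\phi_{\lambda_0}(A)=\phi_{\lambda_0}(A)(A-\lambda_0 I)=I$. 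Let $\Phi(\gamma)$ be the holomorphic continuation to $D_\delta$ of the operator function $\gamma\mapsto\phi_{\lambda_0}(A_\gamma)$ furnished by the FH property.

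Next, since $A_\gamma$ is self-adjoint for real $\gamma$ and $\|A_\gamma-A\|=|\gamma|\,\|B\|$, every point of $\sigma(A_\gamma)$ lies within distance $|\gamma|\,\|B\|$ of $\sigma(A)$; hence there is $\delta_0\in(0,\delta]$ such that $\sigma(A_\gamma)\subseteq N$ for all real $\gamma\in(-\delta_0,\delta_0)$. For such $\gamma$ the same functional-calculus remark gives $\phi_{\lambda_0}(A_\gamma)=R_{\lambda_0}(A_\gamma)$, whence $(A_\gamma-\lambda_0 I)\Phi(\gamma)=\Phi(\gamma)(A_\gamma-\lambda_0 I)=I$. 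The two operator functions $\gamma\mapsto(A_\gamma-\lambda_0 I)\Phi(\gamma)$ and $\gamma\mapsto\Phi(\gamma)(A_\gamma-\lambda_0 I)$ are holomorphic on $D_\delta$, being products of the affine holomorphic map $\gamma\mapsto A+\gamma B-\lambda_0 I$ with $\Phi$, and they coincide with the constant $I$ on the interval $(-\delta_0,\delta_0)$, which has accumulation points inside $D_\delta$. By the identity theorem for holomorphic operator-valued functions on the connected domain $D_\delta$ (reduced to the scalar case by composing with functionals from $\Bc(\B)^\ast$) they equal $I$ on all of $D_\delta$; thus $A_\gamma-\lambda_0 I$ is boundedly invertible and $\lambda_0\notin\sigma(A_\gamma)$ for every $\gamma\in D_\delta$. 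As $\lambda_0\in\Cc\setminus\sigma(A)$ was arbitrary, $\sigma(A_\gamma)\subseteq\sigma(A)$ for all $\gamma\in D_\delta$, and Lemma~\ref{linc} then yields $\sigma(A_\gamma)=\sigma(A)$ for all $\gamma\in D_\delta$, i.e. the CLI property.

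The main obstacle is conceptual rather than technical: one has to recognise that FH should be fed the (truncated) resolvent functions $(t-\lambda_0)^{-1}$ and that the identity theorem is the mechanism that upgrades the locally valid resolvent identity to the full disk $D_\delta$ guaranteed by FH. Once this is in place, the remaining ingredients — the Tietze extension, the dependence of the continuous functional calculus only on the values of the function on the spectrum, upper semicontinuity of the spectrum in its elementary self-adjoint form, and the operator-valued identity theorem — are all routine.
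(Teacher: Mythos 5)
Your argument is correct, but it takes a genuinely different route from the paper's. The paper does not touch resolvents at all: it observes that the FH property makes $\Phi(\gamma)$ differentiable at $\gamma=0$ for every $\phi\in C[-M,M]$, applies the Banach--Steinhaus theorem to get a uniform bound on the difference quotients $\Sigma(\phi,\gamma)=(\Phi(\gamma)-\phi(A))/\gamma$ as $\gamma\to 0$, and then derives a contradiction from a failure of $\sigma(A_\gamma)\subseteq\sigma(A)$: escaping spectral points $\lambda_n\in\sigma(A_{\gamma_n})\setminus\sigma(A)$ are isolated eigenvalues (Weyl's theorem, using compactness of $B$), and carefully scaled entire test functions ($e^{\mp\mathrm{i}\tau_n(\lambda-\lambda_n)}$ for non-real $\lambda_n$, Gaussians $e^{-\tau_n(\lambda-\lambda_n)^2}$ for real $\lambda_n$) satisfying $\phi_n(\lambda_n)=1$ and $\sup_{\sigma(A)}|\phi_n|\le e^{-1}$ force $\|\Sigma(\phi_{n,r},\gamma_n)\|\ge(1-e^{-1})/|\gamma_n|\to\infty$, contradicting the uniform bound. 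Your proof instead feeds truncated resolvent functions into FH and uses the identity theorem to propagate the two-sided inverse of $A_\gamma-\lambda_0 I$ from a small real interval to the whole disk $D_\delta$. What you gain is a direct, non-quantitative argument: no uniform boundedness principle, no eigenvectors, no Weyl theorem in this half of the proof (compactness of $B$ enters only through Lemma~\ref{linc}), and no cleverly tuned test functions. What the paper's route exhibits, by contrast, is that the obstruction to spectral escape is really a blow-up of the operator derivative at $\gamma=0$ measured uniformly over the unit ball of $C[-M,M]$, which ties this theorem to the directional-derivative theme of the earlier work it cites. Both proofs finish identically by invoking Lemma~\ref{linc}, and your reduction of the operator-valued identity theorem to the scalar case via functionals in $\Bc(\B)^\ast$ is the standard and correct justification.
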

\begin{proof}
Let $M>0$ be such a number that $\sigma(A)\subset(-M,M)$. Then for a small
enough $\gamma$ $\sigma(A_\gamma)\subset(-M,M)$. In view of the FH
property, for any function $\phi\in C[-M,M]$ the
operator function $\Phi(\gamma)$, which is the holomorphic
continuation of $\phi(A_\gamma)$ from $(-\delta, \delta)$ to the disk
$D_\delta$, is differentiable at the point $\gamma=0$ with respect to the uniform
operator topology, that is in this topology there exists the limit
$$
\lim_{\gamma\rightarrow}\frac{\Phi(\gamma)-\phi(A)}
{\gamma}
$$
for any $\phi\in C[-M,M]$. By Banach-Steinhaus Theorem this fact implies
that
\begin{equation}\label{lmspSg}
\limsup_{\gamma\rightarrow 0}\left\|
\Sigma(\cdot,\gamma)\right\|_{\Bc\left(C[-M,M],\Bc(\B)\right)}<\infty,
\end{equation}
where
\begin{equation}\label{dfSg}
\Sigma(\phi,\gamma)=\frac{\Phi(\gamma)-\phi(A)}
{\gamma}
\end{equation}


By Lemma \ref{linc}, in  order to prove CLI property, it is enough
to establish the property:
\begin{equation}\label{inclspc}
\exists\;\sigma>0\;\;
 \forall\;\gamma\in D_\sigma:\quad
 \sigma(A_\gamma)\subseteq\sigma(A).
\end{equation}
Assume, on the contrary, that this property does not hold. This means that
 there exists a sequence $\{\gamma_n\}_{n=1}^\infty$ such that
$\lim_{n\rightarrow\infty}\gamma_n=0$, $\gamma_n\neq 0$
and
\begin{equation}\label{dfSn}
S_n=\sigma(A_{\gamma_n})\setminus\sigma(A)\neq\emptyset
\end{equation}
for all natural $n$. Since $B$ is compact, then by H. Weyl Theorem,
(\ref{sgme}) is valid. Hence each set
$S_n$ is contained in $\sigma_d(A_{\gamma_n})$, that is it consists of
isolated eigenvalues of the operator $A_{\gamma_n}$
of finite algebraic multiplicity.
If we shall prove that
\begin{equation}\label{lmspSginf}
\limsup_{n\rightarrow\infty}\left\|
\Sigma(\cdot,\gamma_n)\right\|_{\Bc\left(C[-M,M],\Bc(\B)\right)}=\infty,
\end{equation}
then we shall obtain a contradiction with (\ref{lmspSg}) and the theorem
will be proved. Denote by $\Ec$ the set of all entire functions
$\phi:\;\Cc\rightarrow\Cc$ and for any $\phi\in\Ec$ denote
\begin{equation}\label{dfphir}
\phi_r=\phi\vert_{[-M,M]}.
\end{equation}
Then, in order to prove (\ref{lmspSginf}),
it is enough to show that
\begin{equation}\label{lmspSgent}
\limsup_{n\rightarrow\infty}\left(\sup\left\{\left\|
\Sigma(\phi_r,\gamma_n)\right\|_{\Bc(\B)}\;:\;\phi\in\Ec,\;
\|\phi_r\|_{C[-M,M]}\le 1
\right\}\right)=\infty.
\end{equation}
As is easy to see, the family of spectra
$\{\sigma(A_\gamma)\}_{\gamma\in D_\delta}$ is uniformly bounded in the
complex plane. Then there exists a closed contour $\Gamma$ lying in the
resolvent set $\Rc(A_\gamma)$ of each operator
$A_\gamma\;(\gamma\in D_\delta)$ and surrounding all the spectra
$\sigma(A_\gamma)\;(\gamma\in D_\delta)$. As is known, for any
$\phi\in\Ec$
$$
\phi(A_\gamma)=-\frac{1}{2\pi \mathrm{i}}\oint_\Gamma
\phi(\lambda)R_\lambda(A_\gamma)\,d\lambda.
$$
and for any fixed
$\lambda\in\Rc(A_\gamma)$ the resolvent $R_\lambda(A_\gamma)$
is a holomorphic
operator function of $\gamma$ in a neighborhood of $\gamma=0$.
These circumstances imply
that $\phi(A_\gamma)$ is a holomorphic operator function of
$\gamma$ in a disk $D_\sigma\;(\sigma\in(0,\delta))$.
Thus, by
the principle of analytic continuation, for any $\phi\in\Ec$ the
function $\Phi(\gamma)$, corresponding to $\phi_r$, coincide
with $\phi(A_\gamma)$ in $D_\sigma$. Using the assumption (\ref{dfSn}),
we can take $\lambda_n\in S_n$ for each natural $n$. We can select from
this sequence an infinite subsequence (we denote it in the same manner:
$\lambda_n$) having one of the following properties:

(a) or $\lambda_n\notin\R$ for any natural $n$,

(b) or $\lambda_n\in\R$
for any natural $n$.


Let us construct a sequence of entire functions $\phi_n(\lambda)$
in the following manner. In case (a) we put:
\begin{eqnarray}\label{dfphin1}
\phi_n(\lambda):=\left\{\begin{array}{ll}
e^{-\mathrm{i}\tau_n(\lambda-\lambda_n)},&\;\mathrm{if}\;\;
\Im(\lambda_n)>0,\\
e^{\mathrm{i}\tau_n(\lambda-\lambda_n)},&\;\mathrm{if}\;\;
\Im(\lambda_n)<0.
\end{array}\right.
\end{eqnarray}
In case (b) we put:
\begin{equation}\label{dfphin3}
\phi_n(\lambda):=e^{-\tau_n(\lambda-\lambda_n)^2}.
\end{equation}
In both the cases the sequence $\tau_n>0$ will be specified
in the sequel. For each natural $n$ let as take an eigenvector
$e_n$ of the operator $A_{\gamma_n}$, corresponding to its eigenvalue
$\lambda_n$, such that $\|e_n\|=1$.
As is known, $\phi_n(A_{\gamma_n})e_n=\phi(\lambda_n)e_n$. Taking
into account that, in view of definitions (\ref{dfphin1}),
(\ref{dfphin3}), $\phi_n(\lambda_n)=1$, we obtain in both the cases (a)
and (b):
$$
\Sigma(\phi_{n,r},\gamma_n)e_n=\frac{\phi_n(A_{\gamma_n})e_n-\phi_n(A)e_n}
{\gamma_n}=\frac{1}{\gamma_n}e_n-\frac{1}{\gamma_n}\phi_n(A)e_n.
$$
Here $\phi_{n,r}$ is defined by (\ref{dfphir}) with $\phi=\phi_n$.
Then
\begin{equation}\label{estblw}
\Vert\Sigma(\phi_{n,r},\gamma_n)\Vert_{\Bc(\B)}\ge\frac{1}{|\gamma_n|}-
\frac{1}{|\gamma_n|}\Vert\phi_n(A)\Vert_{\Bc(\B)}.
\end{equation}
On the other hand, since the operator $A$ is self-adjoint, we get:
\begin{equation}\label{nrphin}
\Vert\phi_n(A)\Vert_{\Bc(\B)}\le\max_{\lambda\in\sigma(A)}
|\phi_n(\lambda)|.
\end{equation}
In the case (a) we obtain from (\ref{dfphin1}):
\begin{equation}\label{estnrph1}
\max_{\lambda\in\sigma(A)}|\phi_n(\lambda)|\le e^{-\tau_n|\Im(\lambda_n)|},
\end{equation}
and in the case (b) we obtain from (\ref{dfphin3}):
\begin{equation}\label{estnrph2}
\max_{\lambda\in\sigma(A)}|\phi_n(\lambda)|\le e^{-\tau_nd_n^2},
\end{equation}
where $d_n=\Dis(\lambda_n,\sigma(A))$. Let us choose $\tau_n$ in the
following manner: in case (a) we put $\tau_n=|\Im(\lambda_n)|^{-1}$
and in case (b) we put $\tau_n=d_n^{-2}$. Then in both cases (a)
and (b) we obtain from (\ref{estblw})-(\ref{estnrph2}):
\begin{equation}\label{finest}
\Vert\Sigma(\phi_{n,r},\gamma_n)\Vert_{\Bc(\B)}\ge\frac{1-e^{-1}}
{|\gamma_n|}
\end{equation}
Observe that, in view of definitions (\ref{dfphin1}), (\ref{dfphin3}),
$\Vert\phi_{n,r}\Vert_{C[-M,M]}\le 1$. Then, taking into accont that
$\lim_{n\rightarrow\infty}\gamma_n=0$, we obtain from (\ref{finest}) the desired
limiting relation (\ref{lmspSgent}). The theorem is proven.
\end{proof}

\section{The case of a rank-one perturbation}
\label{sec:rank one}
\setcounter{equation}{0}

In the space $\B=L_2(\R,\rho)$ with a compactly supported non-negative
Borel measure $\rho$ consider the multiplication operator
$$
(Af)(\mu)=\mu f(\mu)\;\;(f\in\B)
$$
and its perturbation $A_\gamma=A+\gamma B$, where $B$ is a rank-one
operator: $B=(\cdot,g)g$ and $g(\mu)=1$ almost everywhere with respect  to
the measure $\rho$ (hence $g\in\B$). We call $\rho$ the
{\it spectral measure} of $A$. Let us take $M>0$ such that
$\sigma(A)=\p(\rho)\subset(-M,M)$.
In this section we shall obtain
a necessary and sufficient condition for the spectral measure $\rho$
ensuring the UHE property of the pair $A,\;B$.
\vskip 0.3truecm

\subsection{Friedrichs method}

We shall use
the method of 0. K. Friedrichs (\cite{F}, Chapt. II, Sect. 6).
Let us describe it briefly.
One searches for a pair of operators $U^+$ and $U^-$ satisfying
the conditions
\begin{equation}\label{eqpl}
AU^+=U^+A_\gamma
\end{equation}
and
\begin{equation}\label{eqmn}
A_\gamma U^-=U^-A
\end{equation}
and shows that they can be chosen such that
$U^+U^-=U^-U^+=I$ and $U^-=(U^+)^\star$. Let us write the equations
(\ref{eqpl}) and (\ref{eqmn}) in the form:
\begin{equation}\label{eqpl1}
[A,U^+]=AU^+-U^+A=\gamma U^+B,
\end{equation}
\begin{equation}\label{eqmn1}
[A,U^-]=AU^--U^-A=-\gamma BU^-.
\end{equation}
In order to solve these equations, we need be able to solve a simpler
equation
\begin{equation}\label{commeq}
[A,Z]=AZ-ZA=R\;\;(R\in\Bc(\B))
\end{equation}
in the class $\Bc(\B)$. It is clear that if there exists a solution
$Z_0$ of this equation, it is not unique, because $Z=Z_0+C$, where
$C$ commutes with $A$, is also solution of this equation. We shall
show that equation (\ref{commeq}) has a solution, if $R$ belongs to some
class of operators $\Rc$. Denote by $\Gamma$ a transformer which
associates a solution $Z$ of equation (\ref{commeq}) to each $R\in\Rc$:
$Z=\Gamma R$. In the sequel this transformer $\Gamma$ will be chosen in a
suitable manner. Following the method of Friedrichs, one searches for
solutions of equations (\ref{eqpl1}), (\ref{eqmn1}) in the form:
\begin{equation}\label{eqUplmn}
U^+=I+\Gamma R^+,\quad U^-=I-\Gamma R^-,
\end{equation}
where $R^+$ and $R^-$ satisfy the equations
\begin{equation}\label{eqRpl}
R^+=\gamma(I+\Gamma R^+)B,
\end{equation}
\begin{equation}\label{eqRmn}
R^-=\gamma B(I-\Gamma R^-).
\end{equation}
We see easily that if $R^+$ and $R^-$ are solutions of these equations, then
$U^+$ and $U^-$, defined by (\ref{eqUplmn}), indeed satisfy
equations (\ref{eqpl1}), (\ref{eqmn1}).

\subsection{Transformer $\Gamma$}

In order to define the
transformer $\Gamma$, we need to solve the equation (\ref{commeq}).
How to do this ?  In the book \cite{D-K} the operator equations of the
form
\begin{equation}\label{eqcomAB}
AZ-BZ=R\;\;(A,B\in\Bc(\B))
\end{equation}
(and even of more general form) are considered. As is shown there, if
\begin{equation}\label{spcond}
\sigma(A)\cap\sigma(B)=\emptyset,
\end{equation}
then the equation (\ref{eqcomAB}) has a unique solution in $\Bc(\B)$
and it has the form:
$$
Z=-\frac{1}{4\pi^2}\oint_{C_A}\oint_{C_B}\frac{R_\lambda(A) R R_\mu(B)}
{\lambda-\mu}\,d\lambda\,d\mu,
$$
where the contours $C_A$ and $C_B$ lie in $\Rc(A)\cap\Rc(B)$, $C_A$
surrounds $\sigma(A)$, but does not surround $\sigma(B)$ and $C_B$
surrounds $\sigma(B)$, but does not surround $\sigma(A)$. We see that
the condition (\ref{spcond}) is not satisfied for our equation
(\ref{commeq}). Hence we consider first the following ``regularized''
equation
\begin{equation}\label{regeq}
(A+\mathrm{i}\epsilon I)Z-ZA=R,
\end{equation}
where $\epsilon>0$. Since $A$ is self-adjoint,
$\sigma(A+\mathrm{i}\epsilon I)\cap\sigma(A)=\emptyset$, hence the equation
(\ref{regeq}) has in $\Bc(\B)$ a unique solution
\begin{equation}\label{dfGmep}
Z_\epsilon=\Gamma_\epsilon R=
-\frac{1}{2\pi\mathrm{i}}\oint_{C_\epsilon^1}\left(
-\frac{1}{2\pi\mathrm{i}}\oint_{C_\epsilon^2}
\frac{R_\lambda(A+\mathrm{i}\epsilon I)}
{\lambda-\mu}\,d\lambda\right)R R_\mu(A)\,d\mu,
\end{equation}
where the contours $C_\epsilon^1$ and $C_\epsilon^2$
lie in $\Rc(A)\cap\Rc(A+\mathrm{i}\epsilon I)$, $C_\epsilon^2$
surrounds $\sigma(A+\mathrm{i}\epsilon I)$, but does not surround $\sigma(A)$ and
$C_\epsilon^1$
surrounds $\sigma(A)$, but does not surround
$\sigma(A+\mathrm{i}\epsilon I)$. Hence the expression inside the brackets
in (\ref{dfGmep}) yields $f_\mu(A+\mathrm{i}\epsilon I)$, where
$f_\mu(\lambda)=(\lambda-\mu)^{-1}$. Thus, we have:
\begin{equation}\label{dfGmep1}
Z_\epsilon=\Gamma_\epsilon R=
-\frac{1}{2\pi\mathrm{i}}\oint_{C_\epsilon^1}R_{\mu-\mathrm{i}\epsilon}(A)
R R_\mu(A)\,d\mu.
\end{equation}
If for some $R\in\Bc(\B)$ there exists the limit
\begin{equation}\label{lim}
Z=\lim_{\epsilon\downarrow 0}Z_\epsilon
\end{equation}
in the strong operator topology,
then tending $\epsilon\downarrow 0$ in (\ref{regeq}) with $Z=Z_\epsilon$,
we obtain that
$Z$ is a solution of the equation (\ref{commeq}) belonging to $\Bc(\B)$.
As above, we denote it by $\Gamma R$. Denote by $\Dm(\Gamma)$ the domain
of definition of the transformer $\Gamma$, i. e. this is the set
of all $R\in\Bc(\B)$ such that for the solution $Z_\epsilon$ of the
equation (\ref{regeq}) there exists the limit (\ref{lim}) in the strong
operator topology. It is clear that $\Gamma$ is a linear transformer.

\subsection{Transformer $\Gamma$ and Riesz projection}

 Let us return to the multiplication operator $A$ and its rank-one
perturbation described in the beginning of this section.
In the sequel we shall describe a class of rank-one operators acting
in the space $\B=L_2(\R,\rho)$ and belonging to $\Dm(\Gamma)$.
To this end we need the following

\begin{lemma}\label{lGmep}
Assume that the spectral measure $\rho$ of the operator $A$ is absolutely
continuous with the density $\tilde\rho$. Let $R$ be a rank-one
operator acting in $\B$, that is
$R=(\cdot,r_1)r_2\;(r_1,r_2\in\B)$, and let $\Gamma_\epsilon\;(\epsilon>0)$
be the transformer in $\Bc(\B)$ defined by (\ref{dfGmep1}).
Then $\Gamma_\epsilon R$ is an integral operator with the kernel
$$
\frac{r_2(x)\overline{r_1(t)}\tilde\rho(t)}{x+\mathrm{i}\epsilon-t}.
$$
\end{lemma}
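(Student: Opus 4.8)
The plan is to compute $\Gamma_\epsilon R$ directly from the contour-integral formula (\ref{dfGmep1}) by evaluating the resolvents as multiplication operators. Since $A$ is the multiplication operator by $\mu$ on $\B=L_2(\R,\rho)$, for $\lambda\notin\sigma(A)$ the resolvent $R_\lambda(A)$ acts as multiplication by $(\mu-\lambda)^{-1}$; likewise $R_{\mu-\mathrm{i}\epsilon}(A)$ acts as multiplication by $(x - (\mu-\mathrm{i}\epsilon))^{-1} = (x + \mathrm{i}\epsilon - \mu)^{-1}$ in the spatial variable $x$. With $R = (\cdot, r_1) r_2$, the composition $R_{\mu - \mathrm{i}\epsilon}(A)\, R\, R_\mu(A)$ applied to a vector $h\in\B$ first multiplies $h$ by $(\cdot - \mu)^{-1}$, then takes the inner product against $r_1$, producing the scalar $\int_\R \frac{h(t)\,\overline{r_1(t)}}{t-\mu}\,d\rho(t)$, then multiplies $r_2$ by that scalar, then multiplies by $(x+\mathrm{i}\epsilon-\mu)^{-1}$. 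So the integrand in (\ref{dfGmep1}), as an operator applied to $h$, is the function $x\mapsto \frac{r_2(x)}{x+\mathrm{i}\epsilon-\mu}\int_\R \frac{h(t)\,\overline{r_1(t)}}{t-\mu}\,d\rho(t)$.

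Next I would carry out the contour integral over $C_\epsilon^1$ in the variable $\mu$. The contour $C_\epsilon^1$ surrounds $\sigma(A)=\p(\rho)$ but not $\sigma(A+\mathrm{i}\epsilon I) = \sigma(A)+\mathrm{i}\epsilon$. Using the hypothesis that $\rho$ is absolutely continuous with density $\tilde\rho$, so that $d\rho(t) = \tilde\rho(t)\,dt$, I would write $(\Gamma_\epsilon R)h$ as $x\mapsto -\frac{1}{2\pi\mathrm{i}}\oint_{C_\epsilon^1}\frac{r_2(x)}{x+\mathrm{i}\epsilon-\mu}\left(\int_\R \frac{h(t)\,\overline{r_1(t)}\,\tilde\rho(t)}{t-\mu}\,dt\right)d\mu$. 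Interchanging the $\mu$-contour integral with the $t$-integral (justified by continuity and compactness of the contour, the density being in $L_1$), the inner $\mu$-integral becomes $-\frac{1}{2\pi\mathrm{i}}\oint_{C_\epsilon^1}\frac{d\mu}{(x+\mathrm{i}\epsilon-\mu)(t-\mu)}$. The integrand has two simple poles in $\mu$: at $\mu = t$, which lies inside $C_\epsilon^1$ since $t\in\p(\rho)\subseteq\sigma(A)$, and at $\mu = x+\mathrm{i}\epsilon$, which lies outside $C_\epsilon^1$ since $x\in\p(\rho)$ makes $x+\mathrm{i}\epsilon\in\sigma(A)+\mathrm{i}\epsilon$. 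By the residue theorem, only the pole at $\mu=t$ contributes, giving residue $\frac{1}{x+\mathrm{i}\epsilon - t}$ (with the sign from $-\frac{1}{2\pi\mathrm{i}}\cdot 2\pi\mathrm{i}$ and the orientation of the contour combining to $+1$ — I would check the sign bookkeeping carefully here).

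Assembling the pieces, $(\Gamma_\epsilon R)h$ is the function $x\mapsto \int_\R \frac{r_2(x)\,\overline{r_1(t)}\,\tilde\rho(t)}{x+\mathrm{i}\epsilon - t}\,h(t)\,dt$, which exhibits $\Gamma_\epsilon R$ as the integral operator with kernel $\frac{r_2(x)\,\overline{r_1(t)}\,\tilde\rho(t)}{x+\mathrm{i}\epsilon - t}$, as claimed. The main obstacle I anticipate is the rigorous justification of the two interchanges of integration (contour versus $\rho$-integral) together with the residue computation, and in particular keeping straight the three sign conventions: the overall $-\frac{1}{2\pi\mathrm{i}}$ prefactor, the orientation of $C_\epsilon^1$, and the sign of the residue at $\mu=t$ relative to the way $(t-\mu)^{-1}$ (rather than $(\mu-t)^{-1}$) appears. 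A secondary technical point worth a remark is that $\Gamma_\epsilon R$ must land in $\Bc(\B)$: one should note the kernel is bounded since $|x+\mathrm{i}\epsilon - t|\ge\epsilon$, and $r_1, r_2, \tilde\rho$ have the needed integrability (this uses $\p(\rho)$ compact and $\tilde\rho\in L_1$), so the operator is well-defined; but since uniqueness of the solution of (\ref{regeq}) in $\Bc(\B)$ is already known, it suffices to verify that this integral operator does solve (\ref{regeq}), which is a direct computation applying $(A+\mathrm{i}\epsilon I)$ on the left and $A$ on the right to the kernel.
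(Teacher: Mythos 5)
Your argument is correct and is essentially the paper's own proof: the paper likewise interprets the resolvents as multiplication operators, writes $(\Gamma_\epsilon R f)(x)$ as a double integral over $C_\epsilon^1$ and $t$, interchanges the order of integration, and evaluates the inner contour integral by Cauchy's formula, with the pole at $\mu=t$ inside the contour and the pole at $\mu=x+\mathrm{i}\epsilon$ outside (the paper fixes $C_\epsilon^1=\{\lambda:\Dis(\lambda,(-M,M))=\epsilon/4\}$ to make this explicit). The sign and orientation bookkeeping you flag works out exactly as you expect, yielding the kernel $r_2(x)\overline{r_1(t)}\tilde\rho(t)/(x+\mathrm{i}\epsilon-t)$.
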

\begin{proof}
Recall that $\sigma(A)=\p(\rho)\subset(-M,M)$. Let us take in
(\ref{dfGmep1})
$$
C_\epsilon^1=\left\{\lambda\in\Cc\;\vert\;
\Dis(\lambda,(-M,M))=\frac{\epsilon}{4}\right\}.
$$
We have for $f\in\B$:
\begin{eqnarray}
&&\Gamma_\epsilon R f(x)=-\frac{1}{2\pi\mathrm{i}}\oint_{C_\epsilon^1}
\,d\mu\int_{-\infty}^\infty\frac{r_2(x)f(t)
\overline{r_1(t)}\tilde\rho(t)\,dt}
{(x+\mathrm{i}\epsilon-\mu)(t-\mu)}=
\nonumber\\
&&
\int_{-M}^M f(t)
\overline{r_1(t)}\tilde\rho(t)\,dt\left(\frac{1}{2\pi\mathrm{i}}
\oint_{C_\epsilon^1}
\frac{d\mu}{(x+\mathrm{i}\epsilon-\mu)(\mu-t)}\right)r_2(x)
\nonumber
\end{eqnarray}
Using Cauchy formula inside of the brackets, we get:
\begin{equation}\label{frmGmep}
\Gamma_\epsilon R f(x)=\int_{-\infty}^\infty\frac{r_2(x)
\overline{r_1(t)}\tilde\rho(t)f(t)\,dt}
{x+\mathrm{i}\epsilon-t}.
\end{equation}
This proves the lemma.
\end{proof}
We can write the formula (\ref{frmGmep}) with the help of the
following bounded operator acting in the space $L_2(\R)$:
\begin{equation}\label{pepl}
(P_{+,\epsilon}h)(u):=\frac{1}{2\pi \mathrm{i}}\int_{-\infty}^\infty
\frac{h(s)\,\mathrm{d}s}{s-u-\mathrm{i}\epsilon}
\quad(\epsilon>0,\; h\in L_2(\R)).
\end{equation}
In order to do this, we shall impose the following conditions on
the operator $A$ and on a rank-one operator $R$:
\vskip 0.2truecm

(A) The spectral measure $\rho$ of the operator $A$ is absolutely
continuous and its density $\tilde\rho$ belongs to the class
$L_\infty(\R)$;
\vskip 0.2truecm

(B) $R=(\cdot,r_1)r_2$, where $r_k\in\L_\infty(\R)\;(k=1,2)$.
\vskip 0.3truecm

We have the following consequence of Lemma \ref{lGmep}:
\begin{corollary}
If  the conditions (A) and (B) are satisfied, then
the rank-one operator $R=(\cdot,r_1)r_2$ is bounded in $\B$ and
the representation is valid:
\begin{equation}\label{rprGmep}
\Gamma_\epsilon R=-2\pi\mathrm{i}JM_{r_2}P_{+,\epsilon}
M_{\bar r_1\tilde\rho},
\end{equation}
where $J$ is the embedding operator from $L_2(\R)$ into $\B$.
\end{corollary}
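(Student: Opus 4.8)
The plan is to trace carefully through the definitions. We have, by Lemma~\ref{lGmep} together with conditions (A) and (B), that $\Gamma_\epsilon R$ is an integral operator on $\B=L_2(\R,\rho)$ with kernel $\dfrac{r_2(x)\overline{r_1(t)}\tilde\rho(t)}{x+\mathrm{i}\epsilon-t}$; that is, for $f\in\B$,
\begin{equation*}
(\Gamma_\epsilon R f)(x)=\int_{-\infty}^\infty\frac{r_2(x)\overline{r_1(t)}\tilde\rho(t)f(t)}{x+\mathrm{i}\epsilon-t}\,dt .
\end{equation*}
First I would record that under (A) and (B) the operator $R=(\cdot,r_1)r_2$ is indeed bounded in $\B$: since $\tilde\rho\in L_\infty(\R)$ and $r_1\in L_\infty(\R)$ with $\p(\rho)$ compact, $r_1\in L_2(\R,\rho)=\B$, and similarly $r_2\in\B$, so $R$ is a bounded rank-one operator. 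This justifies the first assertion.

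Next I would identify the integral as a composition of three operators acting between $L_2(\R)$ and $\B$. Reading the kernel from right to left: the multiplication operator $M_{\bar r_1\tilde\rho}$ sends $f$ (viewed in $L_2(\R)$ via the fact that $\tilde\rho$ is the density of $\rho$, so $\|f\|_\B^2=\int|f|^2\tilde\rho\,du$) to $\overline{r_1}\,\tilde\rho\, f\in L_2(\R)$; then applying $P_{+,\epsilon}$ from (\ref{pepl}) gives
\begin{equation*}
(P_{+,\epsilon}(\bar r_1\tilde\rho f))(x)=\frac{1}{2\pi\mathrm{i}}\int_{-\infty}^\infty\frac{\overline{r_1(s)}\tilde\rho(s)f(s)}{s-x-\mathrm{i}\epsilon}\,ds
=-\frac{1}{2\pi\mathrm{i}}\int_{-\infty}^\infty\frac{\overline{r_1(s)}\tilde\rho(s)f(s)}{x+\mathrm{i}\epsilon-s}\,ds ;
\end{equation*}
multiplying by $r_2$ and embedding back into $\B$ via $J$ then reproduces exactly $-\dfrac{1}{2\pi\mathrm{i}}\cdot(-2\pi\mathrm{i})=1$ ... more precisely, $JM_{r_2}P_{+,\epsilon}M_{\bar r_1\tilde\rho}f(x)=-\dfrac{1}{2\pi\mathrm{i}}\displaystyle\int_{-\infty}^\infty\dfrac{r_2(x)\overline{r_1(s)}\tilde\rho(s)f(s)}{x+\mathrm{i}\epsilon-s}\,ds$, so $\Gamma_\epsilon R=-2\pi\mathrm{i}\,JM_{r_2}P_{+,\epsilon}M_{\bar r_1\tilde\rho}$, which is (\ref{rprGmep}).

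The only genuine point requiring care is the bookkeeping of domains and codomains and the fact that the composition is well defined and bounded: I would note that $M_{\bar r_1\tilde\rho}$ maps $L_2(\R)\to L_2(\R)$ boundedly because $\bar r_1\tilde\rho\in L_\infty(\R)$ (product of two $L_\infty$ functions), that $P_{+,\epsilon}$ is bounded on $L_2(\R)$ for each $\epsilon>0$ as asserted after (\ref{pepl}), that $M_{r_2}$ is bounded on $L_2(\R)$ since $r_2\in L_\infty(\R)$, and that $J$ is bounded from $L_2(\R)$ into $\B=L_2(\R,\rho)$ precisely because $\tilde\rho\in L_\infty(\R)$ (so $\|Jh\|_\B^2=\int|h|^2\tilde\rho\,du\le\|\tilde\rho\|_\infty\|h\|_{L_2(\R)}^2$). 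I expect the main — though still modest — obstacle to be matching the sign and the factor $2\pi\mathrm{i}$ coming from the definition (\ref{pepl}) of $P_{+,\epsilon}$ against the kernel in (\ref{frmGmep}); once the variable of integration is renamed and the denominator $s-u-\mathrm{i}\epsilon$ is rewritten as $-(u+\mathrm{i}\epsilon-s)$, the constants cancel to give exactly (\ref{rprGmep}), and the corollary follows.
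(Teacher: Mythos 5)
Your proof is correct and follows essentially the same route as the paper's: apply Lemma \ref{lGmep}, read the kernel of (\ref{frmGmep}) as the composition $JM_{r_2}P_{+,\epsilon}M_{\bar r_1\tilde\rho}$ after matching the sign and the factor $2\pi\mathrm{i}$ from (\ref{pepl}), and verify that each factor is bounded. The one small imprecision is that $M_{\bar r_1\tilde\rho}$ should be regarded as a bounded map from $\B=L_2(\R,\rho)$ into $L_2(\R)$ (one factor of $\tilde\rho$ in the multiplier absorbs the weight, via $\int |r_1|^2\tilde\rho^2|f|^2\,du\le\|r_1\|^2_{L_\infty}\|\tilde\rho\|_{L_\infty}\|f\|_\B^2$), not as a map from $L_2(\R)$ to itself, which is exactly how the paper states it.
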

\begin{proof}
Observe that the assumptions of the corollary and the fact that the
measure $\rho$ has a compact support imply that $r_k\in\B$, hence
$R$ is a bounded operator. Observe that since
$\tilde\rho\in L_\infty(\R)$, the embedding
$J:\;L_2(\R)\rightarrow\B$ holds and it is continuous. Furthermore,
in view of the above assumptions, the multiplication operators
$M_{\bar r_1\tilde\rho}$ and $M_{r_2}$ act continuously from
$\B$ into $L_2(\R)$ and from $L_2(\R)$ into itself, respectively.
These circumstances and formula (\ref{frmGmep}) imply the desires
representation (\ref{rprGmep}).
\end{proof}
\begin{proposition}\label{prGmR}
If the conditions (A) and (B) are satisfied, then
the rank-one operator $R=(\cdot,r_1)r_2$ belongs to $\Dm(\Gamma)$
and the representation is valid:
\begin{equation}\label{rprGmR}
\Gamma R=-2\pi\mathrm{i}JM_{r_2}P_+
M_{\bar r_1\tilde\rho},
\end{equation}
where $P_+$ is Riesz projection in $L_2(\R)$ on the Hardy space
\begin{equation*}
\B_+=\{f\in L_2(\R)\;|\;\hat f(\omega)=0\;\;{\rm a.e.}\;\;{\rm on}
\;\;(-\infty,0)\}.
\end{equation*}
\end{proposition}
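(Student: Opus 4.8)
The plan is to obtain the representation \eqref{rprGmR} by passing to the limit $\epsilon\downarrow 0$ in formula \eqref{rprGmep} of the preceding corollary, the point being that the operators $P_{+,\epsilon}$ converge strongly in $L_2(\R)$ to the Riesz projection $P_+$. First I would recall the elementary Fourier-side description of $P_{+,\epsilon}$: for $h\in L_2(\R)$ one has $\widehat{P_{+,\epsilon}h}(\omega)=e^{-\epsilon\omega}\hat h(\omega)\1_{(0,\infty)}(\omega)$ (up to the usual normalization of $\hat{\cdot}$), which follows by computing the Fourier transform of the kernel $\tfrac{1}{2\pi\ir}(s-u-\ir\epsilon)^{-1}$ — this is the Poisson-type kernel of the upper half-plane and its Fourier transform is supported on the positive semi-axis with the exponential damping factor $e^{-\epsilon\omega}$. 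Consequently $P_{+,\epsilon}$ is the Fourier multiplier by $e^{-\epsilon\omega}\1_{(0,\infty)}(\omega)$, hence $\|P_{+,\epsilon}\|_{\Bc(L_2(\R))}\le 1$ uniformly in $\epsilon>0$, and since $e^{-\epsilon\omega}\1_{(0,\infty)}(\omega)\to\1_{(0,\infty)}(\omega)$ pointwise and boundedly, dominated convergence in the frequency variable gives $P_{+,\epsilon}h\to P_+h$ in $L_2(\R)$ for every $h$; that is, $P_{+,\epsilon}\to P_+$ strongly.

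Next I would feed this into \eqref{rprGmep}. Under hypotheses (A) and (B) the operator $M_{\bar r_1\tilde\rho}$ maps $\B=L_2(\R,\rho)$ continuously into $L_2(\R)$, the operator $M_{r_2}$ maps $L_2(\R)$ continuously into itself, and $J:L_2(\R)\to\B$ is continuous, exactly as established in the corollary. Therefore, for fixed $f\in\B$, writing $h=M_{\bar r_1\tilde\rho}f\in L_2(\R)$, we have
\begin{equation*}
\Gamma_\epsilon R f=-2\pi\ir\, J M_{r_2}P_{+,\epsilon}h\;\xrightarrow[\epsilon\downarrow 0]{}\;-2\pi\ir\, J M_{r_2}P_{+}h
\end{equation*}
in the norm of $\B$, because $P_{+,\epsilon}h\to P_+h$ in $L_2(\R)$ and the bounded operators $JM_{r_2}$ are applied continuously. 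By the very definition of $\Dm(\Gamma)$ given in Section \ref{sec:rank one} — namely that $R$ lies in $\Dm(\Gamma)$ precisely when the strong limit $\lim_{\epsilon\downarrow 0}Z_\epsilon=\lim_{\epsilon\downarrow 0}\Gamma_\epsilon R$ exists — this shows $R=(\cdot,r_1)r_2\in\Dm(\Gamma)$ and that $\Gamma R$ is the operator on the right-hand side of \eqref{rprGmR}.

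The main obstacle, and the only place requiring genuine care, is the strong convergence $P_{+,\epsilon}\to P_+$ together with the identification of $P_+$ as the Riesz projection onto $\B_+$. I would handle the Fourier-transform computation of the kernel carefully, tracking the normalization constants so that the $-2\pi\ir$ prefactors match between \eqref{rprGmep} and \eqref{rprGmR}; once the multiplier symbol $e^{-\epsilon\omega}\1_{(0,\infty)}(\omega)$ is in hand, the uniform bound and the strong limit are routine, and the characterization of $\B_+$ as $\{f:\hat f=0\text{ a.e. on }(-\infty,0)\}$ is just the statement that $P_+$ is the Fourier multiplier by $\1_{(0,\infty)}$. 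One subtlety worth a remark: strong (not norm) convergence is all that is claimed and all that is needed, which is fortunate since $P_{+,\epsilon}\to P_+$ fails in operator norm; this is consistent with the strong-operator-topology definition of the limit \eqref{lim} adopted in the construction of $\Gamma$.
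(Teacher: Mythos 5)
Your proof is correct and follows essentially the same route as the paper: both pass to the limit $\epsilon\downarrow 0$ in \eqref{rprGmep}, using the strong convergence $P_{+,\epsilon}\to P_+$ in $L_2(\R)$ together with the continuity of $JM_{r_2}$ and $M_{\bar r_1\tilde\rho}$ to conclude that $R\in\Dm(\Gamma)$ and that \eqref{rprGmR} holds. The only difference is that the paper simply cites Proposition 2.1 of \cite{Ar-Z1} for the convergence $P_{+,\epsilon}\to P_+$, whereas you supply a self-contained (and correct) Fourier-multiplier argument for it.
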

\begin{proof}
By Proposition 2,1 of \cite{Ar-Z1}, the family of operators
$P_{+,\epsilon}$, defined by (\ref{pepl}), has the property:
$\lim_{\epsilon\downarrow 0}P_{+,\epsilon}=P_+$ in the strong operator
topology. Hence, in view of (\ref{rprGmep}),
$$
\lim_{\epsilon\downarrow 0}\Gamma_\epsilon R=-2\pi\mathrm{i}JM_{r_2}P_+
M_{\bar r_1\tilde\rho}
$$
in the strong operator topology. This means that $R\in\Dm(\Gamma)$ and
the desired equality (\ref{rprGmR}) is valid. The proposition is proven.
\end{proof}

\subsection{BTB property and solution of basic equations}

${\bf 3.4^o}$ We now turn to solution of the equations (\ref{eqRpl}) and
(\ref{eqRmn}). In the sequel we need the following notion.

\begin{definition}\label{dfBBT}
We say that the spectral measure $\rho$ of the operator $A$
has the property of Borel Transform Boundedness (briefly - BTB property),
if its Borel transform
\begin{equation}\label{dfBortr}
\mathrm{B}\rho(\lambda):=\int_{\R} \frac{\rho(\mathrm{d}t)}{t-\lambda}
\end{equation}
belongs to the Hardy class $H_\infty^+$ (that is, it is bounded in
the upper (hence in the lower) half-plane of the complex plane
$\Cc$).
\end{definition}

\begin{remark}\label{rBrtr}
BTB property implies that the measure $\rho$
is absolutely continuous and its density $\tilde\rho$ belongs to the
class $L_\infty(\R)$. This fact follows from Stieltyes inversion formula
$$
\rho(b)-\rho(a)=-\frac{1}{\pi}\lim_{\tau\downarrow 0}\int_a^b
\Im(\mathrm{B}\rho(t+\mathrm{i}\tau))dt,
$$
in which $\rho(\mu)$ is the non-decreasing
function  defining the measure $\rho$ and $a,\;b$ are points of
continuity of this function.
\end{remark}

As in the beginning of this section, along with the multiplication
operator $A$ acting in $\B=L_2(\R,\rho)$ we consider the rank-one
perturbing operator $B=(\cdot,g)g$, where $g(t)=1$ almost everywhere
with respect  to the measure $\rho$.
The following statement is valid.
\begin{proposition}\label{solRplmn}
Assume that the spectral measure $\rho$ of the operator $A$ has the
BTB property. Then for a small enough $\gamma\in\Cc$ the equations
(\ref{eqRpl}) and (\ref{eqRmn}) have unique solutions $R^+$ and $R^-$
in the class of operators satisfying condition (B).
Furthermore, they have the form:
\begin{equation}\label{solRpl}
R^+=\gamma(\cdot,g)M_{\psi_\gamma} g
\end{equation}
and
\begin{equation}\label{solRmn}
R^-=\gamma(\cdot,M_{\psi_\gamma} g)g,
\end{equation}
where
\begin{equation}\label{dfps}
\psi_\gamma(x)=(1+2\pi\mathrm{i}\gamma P_+\tilde\rho(x))^{-1}.
\end{equation}
\end{proposition}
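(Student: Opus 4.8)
The plan is to substitute into each of the equations (\ref{eqRpl}), (\ref{eqRmn}) an ansatz of the rank-one shape forced by its right-hand side, to compute $\Gamma R^{\pm}$ explicitly by means of Proposition \ref{prGmR}, and thereby to collapse each operator equation to a single scalar (pointwise, multiplication-type) equation on $\R$, which is then inverted by a Neumann series. The first step is to isolate the one analytic fact that the BTB hypothesis provides, namely that $P_+\tilde\rho$ is a \emph{bounded} function. By Remark \ref{rBrtr}, $\rho$ is absolutely continuous with density $\tilde\rho\in L_\infty(\R)$, and since $\p(\rho)$ is compact one also has $\tilde\rho\in L_2(\R)$; in particular condition (A) holds, so Proposition \ref{prGmR} applies to any rank-one operator of class (B). Comparing the definition (\ref{pepl}) (whose strong limit as $\epsilon\downarrow0$ is $P_+$, by Proposition 2.1 of \cite{Ar-Z1}) with the Borel transform (\ref{dfBortr}) gives $\lim_{\epsilon\downarrow0}\mathrm{B}\rho(\,\cdot\,+\mathrm{i}\epsilon)=2\pi\mathrm{i}\,P_+\tilde\rho$ in $L_2(\R)$, so the BTB property says exactly that $P_+\tilde\rho\in L_\infty(\R)$. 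Consequently, whenever $2\pi|\gamma|\,\|P_+\tilde\rho\|_{L_\infty}<1$, the function $1+2\pi\mathrm{i}\gamma\,P_+\tilde\rho$ is bounded away from $0$, so $\psi_\gamma\in L_\infty(\R)$ and $M_{\psi_\gamma}=(I+2\pi\mathrm{i}\gamma M_{P_+\tilde\rho})^{-1}$ as bounded operators both on $L_2(\R)$ and on $\B$; in particular the operators on the right of (\ref{solRpl}), (\ref{solRmn}) are of class (B), so Proposition \ref{prGmR} applies to them.

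For existence I would verify the two displayed formulas by direct substitution. Writing $R^+$ from (\ref{solRpl}) as $(\cdot,g)w$ with $w=\gamma M_{\psi_\gamma}g=\gamma\psi_\gamma$ (as $g\equiv1$), formula (\ref{rprGmR}) with $r_1=g$, $r_2=w$ (and $\bar g=1$) gives $\Gamma R^+=-2\pi\mathrm{i}\,JM_{w}P_+M_{\tilde\rho}$; applying this to $g$ and using $M_{\tilde\rho}g=\tilde\rho$ one obtains, for every $f\in\B$,
$$
\gamma(I+\Gamma R^+)Bf=\gamma(f,g)\bigl(g-2\pi\mathrm{i}\gamma\,\psi_\gamma\,P_+\tilde\rho\bigr).
$$
Since $g\equiv1$, the factor $g-2\pi\mathrm{i}\gamma\,\psi_\gamma\,P_+\tilde\rho$ equals $\psi_\gamma$ precisely because $\psi_\gamma\,(1+2\pi\mathrm{i}\gamma P_+\tilde\rho)=1$ by (\ref{dfps}); hence the right-hand side is $\gamma(f,g)M_{\psi_\gamma}g=R^+f$, i.e. $R^+$ solves (\ref{eqRpl}). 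The verification for $R^-$ is the mirror image: writing $R^-$ in the form $(\cdot,v)g$, one uses the adjoint of (\ref{rprGmR}) --- with $J^{\star}=M_{\tilde\rho}$, $P_+^{\star}=P_+$ and $M_{\bar v\tilde\rho}^{\star}=M_v$ --- to obtain $(\Gamma R^-)^{\star}g=2\pi\mathrm{i}\,v\,P_+\tilde\rho$ in place of $(\Gamma R^+)g$, and again everything reduces to the relation (\ref{dfps}).

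For uniqueness inside the class (B) (the case $\gamma=0$ being trivial, since both formulas then give $R^{\pm}=0$) I would argue as follows. If $R^+$ is any class-(B) solution of (\ref{eqRpl}), then, because $Bf$ is always a scalar multiple of $g$, the equation forces $R^+f=\gamma(f,g)(I+\Gamma R^+)g$, so $R^+=(\cdot,g)w$ with $w=\gamma(I+\Gamma R^+)g\in\B$; comparing with the assumed representation $(\cdot,r_1)r_2$ ($r_k\in L_\infty(\R)$) and using that $R^+$ has one-dimensional nonzero range, one concludes $w\in L_\infty(\R)$, so Proposition \ref{prGmR} applies to $(\cdot,g)w$. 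By the computation of the previous paragraph, (\ref{eqRpl}) then becomes the linear equation $(I+2\pi\mathrm{i}\gamma M_{P_+\tilde\rho})w=\gamma g$ in $\B$, which, since $M_{P_+\tilde\rho}$ is bounded on $\B$, has for $|\gamma|$ as above the unique solution $w=\gamma M_{\psi_\gamma}g$; this is (\ref{solRpl}). The same reasoning applied to (\ref{eqRmn}) yields (\ref{solRmn}). I expect the main obstacle to be not conceptual but a matter of careful bookkeeping: tracking which Hilbert space each factor lives in --- $\B=L_2(\R,\rho)$ versus $L_2(\R)$, linked through the embedding $J$, its adjoint $M_{\tilde\rho}$ and the three multiplication operators in (\ref{rprGmR}) --- and, above all, checking that the solution of the scalar equation really lies in $L_\infty(\R)$, which is the single step that genuinely uses the BTB property: without it $P_+\tilde\rho$ might fail to be bounded, $\psi_\gamma$ would escape $L_\infty$, and the solutions would fall outside class (B).
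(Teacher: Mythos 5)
Your proof is correct and follows essentially the same route as the paper's: use the rank-one form of $B$ to force the ansatz $R^+=(\cdot,g)w$, $R^-=(\cdot,v)g$, compute $\Gamma R^{\pm}$ from Proposition \ref{prGmR}, collapse each operator equation to a pointwise scalar equation, and invoke the BTB property (equivalently, $P_+\tilde\rho\in L_\infty(\R)$, via $2\pi\mathrm{i}P_{+,\epsilon}\tilde\rho(u)=\mathrm{B}\rho(u+\mathrm{i}\epsilon)$) to invert $1+2\pi\mathrm{i}\gamma P_+\tilde\rho$ for small $|\gamma|$. The only caveat --- one you inherit from the paper's own derivation of (\ref{eqrmn}) --- is that for non-real $\gamma$ the antilinearity of $(\cdot,v)$ in $v$ makes the scalar equation for $R^-$ read $v\left(1+2\pi\mathrm{i}\bar\gamma P_+\tilde\rho\right)=\bar\gamma g$, so strictly one obtains $\psi_{\bar\gamma}$ rather than $\psi_\gamma$ in (\ref{solRmn}); the two coincide for real $\gamma$, which is the case needed for the unitarity statements that follow.
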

\begin{proof}
Let $R^+$ and $R^-$ be solutions of equations (\ref{eqRpl}) and
(\ref{eqRmn}) satisfying condition (B). Then, taking into
account that $B=(\cdot,g)g$ and using Proposition \ref{prGmR}, we have:
\begin{equation}\label{eqRpl1}
R^+=\gamma(\cdot,g)(I+\Gamma R^+)g,
\end{equation}
\begin{equation}\label{eqRmn1}
R^-=\gamma(\cdot,(I-\Gamma R^-)^\star g)g,
\end{equation}
that is $R^+$ and $R^-$ have the form:
\begin{equation}\label{ans1}
R^+=(\cdot,g)r^+,
\end{equation}
\begin{equation}\label{ans2}
R^-=(\cdot,r^-)g,
\end{equation}
where $r^+,r^-\in\B$. let us find $r^+$ and $r^-$.
By Proposition \ref{prGmR},
\begin{equation}\label{GmRpl}
\Gamma R^+=-2\pi\mathrm{i}JM_{r^+}P_+
M_{\bar g\tilde\rho},
\end{equation}
and
\begin{equation}\label{GmRmn}
\Gamma R^-=-2\pi\mathrm{i}JM_gP_+
M_{\bar r^-\tilde\rho}.
\end{equation}
Hence, taking into account that $J^\star=M_{\tilde\rho}$, we have:
$$
(\Gamma R^-)^\star=2\pi\mathrm{i}JM_{r^-}P_+
M_{\bar g\tilde\rho}.
$$
Then, after substituting (\ref{ans1}), (\ref{ans2}), (\ref{GmRpl}) and
(\ref{GmRmn}) into (\ref{eqRpl1}) and (\ref{eqRmn1}), we get:
$$
(\cdot,g)r^+=\gamma(\cdot,g)\left(g-2\pi\mathrm{i}JM_{r^+}
P_+(\vert g\vert^2\tilde\rho)\right)
$$
and
$$
(\cdot,r^-)g=\gamma(\cdot,g-2\pi\mathrm{i}JM_{r^-}
P_+(\vert g\vert^2\tilde\rho))g,
$$
that is
\begin{equation}\label{eqrpl}
r^+(x)\left(1+2\pi\mathrm{i}\gamma P_+(\vert g\vert^2\tilde\rho)(x)\right)=
\gamma g(x)
\end{equation}
and
\begin{equation}\label{eqrmn}
r^-(x)\left(1+2\pi\mathrm{i}\gamma P_+(\vert g\vert^2\tilde\rho)(x)\right)=
\gamma g(x).
\end{equation}
Observe that $\vert g\vert^2\tilde\rho=\tilde\rho$, because $g(x)=1$ almost
everywhere with respect  to the measure $\rho$. Consider the operator
$P_{+,\epsilon}\;(\epsilon>0)$ defined by (\ref{pepl}). In view of
BTB property of $\rho$, the family of functions
$\{P_{+,\epsilon}\tilde\rho(x)\}_{\epsilon>0}$ is uniformly bounded on $\R$.
On the other hand, by the property of Hardy class $H_2$,
\begin{equation}\label{limPplep}
\lim_{\epsilon\downarrow 0}P_{+,\epsilon}\tilde\rho(x)=
P_+\tilde\rho(x)
\end{equation}
for almost all $x\in\R$. These circumstances mean that
$P_+\tilde\rho\in L_\infty(\R)$. Hence for a small enough $\gamma\in\Cc$ the
function $\psi_\gamma$, defined by (\ref{dfps}), belongs to the class
$L_\infty(\R)$. Then we have from (\ref{eqrpl}) and (\ref{eqrmn}) that
\begin{equation}\label{rpleqrmn}
r^+=r^-=M_{\psi_\gamma}g.
\end{equation}
Hence, in view of (\ref{ans1}) and
(\ref{ans2}), the operators $R^+$ and $R^-$ have the form
(\ref{solRpl}) and (\ref{solRmn}). So, we have proved the uniqueness
of solution of equations (\ref{eqRpl}) and (\ref{eqRmn}) in the class of
operators satisfying condition (B) (for a small enough $\gamma$).
Carrying out the above arguments in the inverse direction, we can show
that for a small enough $\gamma$ the operators $R^+$ and $R^-$,
expressed by (\ref{solRpl}) and (\ref{solRmn}), satisfy condition (B) and
equations (\ref{eqRpl}) and (\ref{eqRmn}), respectively. The proposition
is proven.
\end{proof}
\vskip 0.3truecm

 Let us return to formulas (\ref{eqUplmn}) for the operators
$U_\gamma^+$ and $U_\gamma^-$. In this section we shall show that
$U_\gamma^+U_\gamma^-=U_\gamma^-U_\gamma^+=I$ for a small enough $\gamma$.
Substituting $R^+$ and $R^-$ from (\ref{solRpl}) and (\ref{solRmn})
into (\ref{eqUplmn}) and taking into account (\ref{GmRpl}), (\ref{GmRmn})
and (\ref{rpleqrmn}), we obtain the explicit formulas for
$U_\gamma^+$ and $U_\gamma^-$:
\begin{equation}\label{frmUpl}
U_\gamma^+=I-2\pi\mathrm{i}\gamma JM_{\psi_\gamma\cdot g}P_+
M_{\bar g\cdot\tilde\rho}
\end{equation}
\begin{equation}\label{frmUmn}
U_\gamma^-=I+2\pi\mathrm{i}\gamma JM_gP_+
M_{\bar\psi_\gamma\cdot\bar g\cdot\tilde\rho},
\end{equation}
where $\psi_\gamma$ is defined by (\ref{dfps}).
In the sequel we need the following

\begin{lemma}\label{prpGmep}
The following identity is valid for any $R_1,R_2\in\Bc(\B)$
and $\epsilon>0$ (see \cite{F}, Chapter II, Sect. 6):
\begin{equation}\label{mainprp}
\Gamma_\epsilon R_1\cdot\Gamma_\epsilon R_2=
\Gamma_{2\epsilon}(\Gamma_\epsilon R_1\cdot R_2+
R_1\cdot\Gamma_\epsilon R_2).
\end{equation}
\end{lemma}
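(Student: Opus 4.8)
The plan is to reduce the identity to the uniqueness of the solution of the regularized commutator equation (\ref{regeq}). Fix $\epsilon>0$ and set $Z_1=\Gamma_\epsilon R_1$, $Z_2=\Gamma_\epsilon R_2$; these are bounded operators and, by the definition of $\Gamma_\epsilon$ (formulas (\ref{regeq})--(\ref{dfGmep1})), they satisfy
\begin{equation*}
(A+\mathrm{i}\epsilon I)Z_k-Z_kA=R_k\qquad(k=1,2).
\end{equation*}
Since $A$ is self-adjoint we also have $\sigma(A+2\mathrm{i}\epsilon I)\cap\sigma(A)=\emptyset$, so, exactly as for $\Gamma_\epsilon$, for each $W\in\Bc(\B)$ the equation $(A+2\mathrm{i}\epsilon I)Y-YA=W$ has a unique solution $Y\in\Bc(\B)$, namely $Y=\Gamma_{2\epsilon}W$. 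Hence it suffices to verify that the bounded operator $Y:=Z_1Z_2$ solves this equation with $W:=Z_1R_2+R_1Z_2$, which is exactly the operator to which $\Gamma_{2\epsilon}$ is applied on the right-hand side of (\ref{mainprp}).

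The verification is a direct algebraic manipulation. Writing $A+2\mathrm{i}\epsilon I=(A+\mathrm{i}\epsilon I)+\mathrm{i}\epsilon I$ and substituting $(A+\mathrm{i}\epsilon I)Z_1=Z_1A+R_1$ together with $Z_2A=(A+\mathrm{i}\epsilon I)Z_2-R_2$ into $(A+2\mathrm{i}\epsilon I)Z_1Z_2-Z_1Z_2A$, the terms $Z_1AZ_2$ and $\mathrm{i}\epsilon Z_1Z_2$ cancel and one is left with
\begin{equation*}
(A+2\mathrm{i}\epsilon I)Z_1Z_2-Z_1Z_2A=R_1Z_2+Z_1R_2=W.
\end{equation*}
By the uniqueness just recalled, $Z_1Z_2=\Gamma_{2\epsilon}W$, which is precisely (\ref{mainprp}).

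I do not expect a genuine obstacle here: the statement is an operator-algebraic identity whose proof uses only the defining equation of $\Gamma_\epsilon$ and the uniqueness of the solution of (\ref{regeq}), the latter guaranteed by the spectral separation $\sigma(A+2\mathrm{i}\epsilon I)\cap\sigma(A)=\emptyset$. The single point that deserves a line of care is that the relevant transformer on the right is $\Gamma_{2\epsilon}$ rather than $\Gamma_\epsilon$, which is forced by the two shifts $+\mathrm{i}\epsilon$ coming from the two factors $Z_1$ and $Z_2$. As an alternative one could insert the double-contour representation (\ref{dfGmep}) into each factor of $\Gamma_\epsilon R_1\cdot\Gamma_\epsilon R_2$ and collapse one inner integration via the partial-fraction identity for $\bigl((\lambda_1-\mu)(\lambda_2-\mu)\bigr)^{-1}$, but the argument through uniqueness of the solution is shorter and cleaner.
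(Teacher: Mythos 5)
Your proof is correct and takes essentially the same route as the paper: both verify, via the splitting $A+2\mathrm{i}\epsilon I=(A+\mathrm{i}\epsilon I)+\mathrm{i}\epsilon I$, that $Z_1Z_2$ satisfies the $2\epsilon$-regularized commutator equation with right-hand side $R_1Z_2+Z_1R_2$, and then conclude by the invertibility of $Z\mapsto (A+2\mathrm{i}\epsilon I)Z-ZA$ (the paper phrases this as applying $\Gamma_{2\epsilon}$ to both sides, you phrase it as uniqueness of the solution). Your plus sign in $R_1Z_2+Z_1R_2$ is the correct one and agrees with the stated identity; the paper's intermediate displayed formula contains a minus-sign typo.
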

\begin{proof}
Denote
$$
[A,Z]_\epsilon= (A+\ir\epsilon I)Z-AZ.
$$
Then, by the definition of $\Gamma_\epsilon$, the equality
$[A,Z]_\epsilon=R$ is equivalent to the equality $Z=\Gamma_\epsilon R$.
Let us take $R_1,R_2\in\Bc(\b)$ and denote
$Z_k=\Gamma_\epsilon R_k\;(k=1,2)$. We have:
\begin{eqnarray}
&&\hskip-5mm[A,Z_1Z_2]_{2\epsilon}=(A+2\ir\epsilon I)Z_1Z_2-Z_1Z_2A=
(A+\ir\epsilon I)Z_1Z_2 +\ir\epsilon Z_1Z_2-
\nonumber\\
&&\hskip-5mm Z_1AZ_2+Z_1AZ_2-Z_1Z_2A=((A+\ir\epsilon I)Z_1-Z_1A)Z_2+
Z_1((A+\ir\epsilon I)Z_2-
\nonumber\\
&&\hskip-5mm Z_2A)=[A,Z_1]_\epsilon Z_2-Z_1[A,Z_2]_\epsilon.
\nonumber
\end{eqnarray}
So, we get:
$$
[A,Z_1Z_2]_{2\epsilon}=[A,Z_1]_\epsilon Z_2-Z_1[A,Z_2]_\epsilon.
$$
Applying $\Gamma_\epsilon$ to both sides of the latter equality, we
obtain the desired identity (\ref{mainprp}). The lemma is proven.
\end{proof}

On the base of the previous lemma we obtain the following

\begin{lemma}\label{prpGm}
If all the conditions of Proposition \ref{solRplmn} are satisfied,
then the operators $R^+$ and $R^-$, defined by (\ref{solRpl}) and
(\ref{solRmn}), have the property:
\begin{equation}\label{mainprp1}
\Gamma R^+\cdot\Gamma R^-=
\Gamma(\Gamma R^+\cdot R^-+
R^-\cdot\Gamma R^+).
\end{equation}
\end{lemma}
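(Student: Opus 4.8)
The plan is to obtain (\ref{mainprp1}) by letting $\epsilon\downarrow 0$ in the identity (\ref{mainprp}) of Lemma~\ref{prpGmep} applied with $R_1=R^+$ and $R_2=R^-$, namely
\begin{equation*}
\Gamma_\epsilon R^+\cdot\Gamma_\epsilon R^-=\Gamma_{2\epsilon}\bigl(\Gamma_\epsilon R^+\cdot R^-+R^+\cdot\Gamma_\epsilon R^-\bigr).
\end{equation*}
Since $R^+$ and $R^-$ satisfy condition (B) (Proposition~\ref{solRplmn}), the representation (\ref{rprGmep}) exhibits $\Gamma_\epsilon R^\pm$ in the form $-2\pi\mathrm{i}JM_{c_\pm}P_{+,\epsilon}M_{d_\pm}$ with fixed $c_\pm,d_\pm\in L_\infty(\R)$ built from $g$, $\psi_\gamma$ and $\tilde\rho$ (compare (\ref{GmRpl}), (\ref{GmRmn}), (\ref{rpleqrmn})). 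Because $P_{+,\epsilon}\to P_+$ strongly in $L_2(\R)$ (Proposition~2.1 of \cite{Ar-Z1}), the family $\{P_{+,\epsilon}\}_{\epsilon>0}$ is uniformly bounded, hence so is $\{\Gamma_\epsilon R^\pm\}_{\epsilon>0}$ in $\Bc(\B)$, while $\Gamma_\epsilon R^\pm\to\Gamma R^\pm$ strongly by Proposition~\ref{prGmR}. The left-hand side is then a product of two uniformly bounded, strongly convergent sequences, so it converges strongly to $\Gamma R^+\cdot\Gamma R^-$.

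For the right-hand side, write $T_\epsilon$ for the bracketed operator and $T_0:=\Gamma R^+\cdot R^-+R^+\cdot\Gamma R^-$. A short computation with (\ref{solRpl}), (\ref{solRmn}), (\ref{rprGmR}) and $g\equiv 1$ ($\rho$-a.e.) shows that both $\Gamma R^+\cdot R^-$ and $R^+\cdot\Gamma R^-$ are rank-one operators whose symbols are $L_\infty(\R)$-multiples of $g$ and of $P_+\tilde\rho$; the latter belongs to $L_\infty(\R)$ by the BTB property (as established inside the proof of Proposition~\ref{solRplmn}), so each of these operators satisfies condition (B), whence $T_0\in\Dm(\Gamma)$ by Proposition~\ref{prGmR} and $\Gamma_{2\epsilon}T_0\to\Gamma T_0$ strongly. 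It remains to prove $\Gamma_{2\epsilon}(T_\epsilon-T_0)\to 0$ strongly. Here $T_\epsilon-T_0=(\Gamma_\epsilon R^+-\Gamma R^+)\cdot R^-+R^+\cdot(\Gamma_\epsilon R^--\Gamma R^-)$, and inserting the representations above one checks that each summand is a rank-one operator $(\cdot,\alpha)_\B\beta$ in which one of $\alpha,\beta$ is fixed in $L_\infty(\R)$ and the other is an $L_\infty(\R)$-function that is uniformly bounded (again by the BTB bound on $\{P_{+,\epsilon}\tilde\rho\}$) and converges to $0$ both in $L_2(\R)$ and $\rho$-a.e. (by (\ref{limPplep})). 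By Lemma~\ref{lGmep}, $\Gamma_{2\epsilon}\bigl[(\cdot,\alpha)_\B\beta\bigr]f$ equals an $L_\infty(\R)$-multiple of $\beta\cdot P_{+,2\epsilon}(\bar\alpha\tilde\rho f)$; separating in $P_{+,2\epsilon}(\bar\alpha\tilde\rho f)$ the strong $L_2(\R)$-limit from its remainder and using dominated convergence together with the uniform bound of $P_{+,2\epsilon}$ on $L_2(\R)$, one obtains $\Gamma_{2\epsilon}(T_\epsilon-T_0)f\to 0$ in $\B$ for every $f\in\B$. Combining the two limits gives $\Gamma R^+\cdot\Gamma R^-=\Gamma T_0$, which is (\ref{mainprp1}).

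The step I expect to be the main obstacle is precisely this last passage to the limit on the right-hand side. Since the transformer $\Gamma_{2\epsilon}$ is \emph{not} uniformly bounded on $\Bc(\B)$, one cannot simply combine the strong convergence $T_\epsilon\to T_0$ with the convergence $\Gamma_{2\epsilon}\to\Gamma$ on $\Dm(\Gamma)$; the argument must stay at the level of the explicit rank-one kernels of Lemma~\ref{lGmep} and use, in an essential way, both the uniform $L_\infty(\R)$-bound on $\{P_{+,\epsilon}\tilde\rho\}$ supplied by the BTB hypothesis and the pointwise convergence (\ref{limPplep}). The convergence of the left-hand side and the membership $T_0\in\Dm(\Gamma)$ are routine once the explicit formulas are written out.
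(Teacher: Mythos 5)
Your proof is correct and follows essentially the same route as the paper: apply the $\epsilon$-level identity of Lemma~\ref{prpGmep}, handle the left side by uniform boundedness plus strong convergence, and pass to the limit on the right side by working with the explicit rank-one kernels of Lemma~\ref{lGmep}, the uniform $L_\infty$-bound on $\{P_{+,\epsilon}\tilde\rho\}$ from the BTB property, the pointwise convergence (\ref{limPplep}), and the strong convergence $P_{+,\epsilon}\to P_+$. The only (immaterial) difference is bookkeeping: the paper writes $\Gamma_{2\epsilon}(\Gamma_\epsilon R^+\cdot R^-)$ and $\Gamma_{2\epsilon}(R^+\cdot\Gamma_\epsilon R^-)$ out as explicit operator products (\ref{Gm2ep1})--(\ref{Gm2ep2}) and takes the limit factor by factor, whereas you split off the limiting operator $T_0$ and estimate the remainder; and your form of the right-hand side, with $R^+\cdot\Gamma R^-$, matches what the paper's proof and its later use in Proposition~\ref{invUpl} actually establish (the $R^-\cdot\Gamma R^+$ in the lemma's displayed statement appears to be a typo).
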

\begin{proof}
By Lemma \ref{prpGmep}, we have:
$$
\Gamma_\epsilon R^+\cdot\Gamma_\epsilon R^-=
\Gamma_{2\epsilon}(\Gamma_\epsilon R^+\cdot R^-+
R^-\cdot\Gamma_\epsilon R^+).
$$
On the other hand, since the operators $R^+$ and $R^-$ satisfy
the condition
(B), then by Proposition \ref{prGmR} they belong to $\Dm(\Gamma)$.
These circumstances imply that, in order to prove (\ref{mainprp1}), it
is enough to show that
$$
\lim_{\epsilon\downarrow 0}\Gamma_{2\epsilon}
(\Gamma_\epsilon R^+\cdot R^-)=
\Gamma(\Gamma R^+\cdot R^-),
$$
$$
\lim_{\epsilon\downarrow 0}\Gamma_{2\epsilon}
(R^+\cdot\Gamma_\epsilon R^-)=
\Gamma(R^+\cdot\Gamma R^-)
$$
in the strong operator topology. By Lemma \ref{lGmep} and
Proposition \ref{solRplmn}, $\Gamma_\epsilon R^+$ and
$\Gamma_\epsilon R^-$ are integral operators with the kernels
$$
\frac{r_\gamma(x)\overline{g(s)}\tilde\rho(s)}{x+\ir\epsilon-s}\quad
\mathrm{and}\quad
\frac{g(x)\overline{r_\gamma(s)}\tilde\rho(s)}{x+\ir\epsilon-s},
$$
respectively, where $r_\gamma(x)=\psi_\gamma(x)g(x)$ and
$\psi_\gamma(x)$ is defined
by (\ref{dfps}). Then $\Gamma_\epsilon R^+\cdot R^-$ and
$R^+\cdot\Gamma_\epsilon R^-$ are integral operators with the kernels
$$
r_\gamma(x)\int_\R\frac{|g(\tilde s)|^2\tilde\rho(\tilde s)\,d\tilde s}
{x+\ir\epsilon-\tilde s}\overline{r_\gamma(s)}\tilde\rho(s)\quad
\mathrm{and}\quad
r_\gamma(x)\int_\R\frac{|g(\tilde s)|^2\tilde\rho(\tilde s)\,d\tilde s}
{\tilde s+\ir\epsilon-s}\overline{r_\gamma(s)}\tilde\rho(s),
$$
respectively. Then, by Lemma \ref{lGmep}, $\Gamma_{2\epsilon}
(\Gamma_\epsilon R^+\cdot R^-)$ and $\Gamma_{2\epsilon}
(R^+\cdot\Gamma_\epsilon R^-)$ are integral operators with the kernels
$$
\frac{r_\gamma(x)}{x+2\ir\epsilon-s}\int_\R\frac{|g(\tilde s)|^2\tilde\rho(\tilde s)\,d\tilde s}
{x+\ir\epsilon-\tilde s}\overline{r_\gamma(s)}\tilde\rho(s)
$$
and
$$
\frac{r_\gamma(x)}{x+2\ir\epsilon-s}\int_\R\frac{|g(\tilde s)|^2\tilde\rho(\tilde s)\,d\tilde s}
{\tilde s+\ir\epsilon-s}\overline{r_\gamma(s)}\tilde\rho(s),
$$
respectively. Then, making use of the operator $P_{+,\epsilon}$,
defined by (\ref{pepl}), we have:
\begin{eqnarray}\label{Gm2ep1}
\Gamma_{2\epsilon}
(\Gamma_\epsilon R^+\cdot R^-)=(2\pi\ir)^2JM_{r_\gamma\cdot
P_{+,\epsilon}(|g|^2\tilde\rho)}P_{+,2\epsilon}M_{\bar r_\gamma
\cdot\tilde\rho},
\end{eqnarray}
\begin{eqnarray}\label{Gm2ep2}
\Gamma_{2\epsilon}
(R^+\cdot\Gamma_\epsilon R^-)=(2\pi\ir)^2JM_{r_\gamma}P_{+,2\epsilon}
M_{\overline{r_\gamma\cdot P_{+,\epsilon}(|g|^2\tilde\rho)}\tilde\rho}.
\end{eqnarray}
Recall that $\vert g\vert^2\tilde\rho=\tilde\rho$, because $g(x)=1$ almost
everywhere with respect  to the measure $\rho$. In view of
BTB property of $\rho$, the family of functions
$\{P_{+,\epsilon}\tilde\rho(x)\}_{\epsilon>0}$ is uniformly bounded on $\R$.
On the other hand, by the property of Hardy class $H_2$, the limiting
relation (\ref{limPplep}) is valid for almost all $x\in\R$. These
circumstances and the fact that $r_\gamma\in L_\infty(\R)$ for
a small enough $\gamma$, imply that for such $\gamma$
$$
\lim_{\epsilon\downarrow 0}M_{r_\gamma\cdot
P_{+,\epsilon}(|g|^2\tilde\rho)}=M_{r_\gamma\cdot
P_+(|g|^2\tilde\rho)}
$$
with respect  to the strong operator topology. Furthermore, by Proposition 2.1
of \cite{Ar-Z1}, $\lim_{\epsilon\downarrow 0}P_{+,\epsilon}=P_+$ with respect 
to this topology. Then we obtain from (\ref{Gm2ep1}) and
(\ref{Gm2ep2}) that
\begin{eqnarray}\label{lmGm2ep1}
\lim_{\epsilon\downarrow 0}\Gamma_{2\epsilon}
(\Gamma_\epsilon R^+\cdot R^-)=(2\pi\ir)^2JM_{r_\gamma\cdot
P_+(|g|^2\tilde\rho)}P_+M_{\bar r_\gamma
\cdot\tilde\rho},
\end{eqnarray}
and
\begin{eqnarray}\label{lmGm2ep2}
\lim_{\epsilon\downarrow 0}\Gamma_{2\epsilon}
(R^+\cdot\Gamma_\epsilon R^-)=(2\pi\ir)^2JM_{r_\gamma}P_+
M_{\overline{r_\gamma\cdot P_+(|g|^2\tilde\rho)}\tilde\rho}.
\end{eqnarray}
with respect to the strong operator topology. On the other hand, we can show
with the help of Proposition \ref{prGmR} that the right hand sides of
(\ref{lmGm2ep1}) and (\ref{lmGm2ep2}) are equal to
$\Gamma(\Gamma R^+\cdot R^-)$ and $\Gamma(R^+\cdot\Gamma  R^-)$
respectively. The lemma is proven.
\end{proof}

We now turn to main result of this subsection.

\begin{proposition}\label{invUpl}
Assume that the spectral measure $\rho$ of the operator $A$ has the
BTB property. Then for a small enough $\gamma\in\Cc$
$$
U_\gamma^+U_\gamma^-=U_\gamma^-U_\gamma^+=I.
$$
\end{proposition}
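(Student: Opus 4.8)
The plan is to verify the identity $U_\gamma^+U_\gamma^- = U_\gamma^-U_\gamma^+ = I$ by a direct computation, using the explicit factorizations established so far together with the multiplicative property of the transformer $\Gamma$ proved in Lemma~\ref{prpGm}. Write $U_\gamma^+ = I + \Gamma R^+$ and $U_\gamma^- = I - \Gamma R^-$ as in (\ref{eqUplmn}). Then
\begin{equation*}
U_\gamma^+U_\gamma^- = (I+\Gamma R^+)(I-\Gamma R^-) = I + \Gamma R^+ - \Gamma R^- - \Gamma R^+\cdot\Gamma R^-,
\end{equation*}
and by Lemma~\ref{prpGm} the last term equals $\Gamma(\Gamma R^+\cdot R^- + R^-\cdot\Gamma R^+)$. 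So it suffices to show that
\begin{equation*}
\Gamma R^+ - \Gamma R^- = \Gamma(\Gamma R^+\cdot R^- + R^-\cdot\Gamma R^+),
\end{equation*}
and since $\Gamma$ is linear this will follow once we check $R^+ - R^- = \Gamma R^+\cdot R^- + R^-\cdot\Gamma R^+$, i.e.\ that $R^+$ and $R^-$ (which in fact coincide as operators, $r^+=r^-=M_{\psi_\gamma}g$ by (\ref{rpleqrmn})) satisfy this algebraic relation. The computation of $U_\gamma^-U_\gamma^+$ is entirely symmetric.

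First I would rewrite the equations (\ref{eqRpl}) and (\ref{eqRmn}) in the convenient form $R^+ = \gamma B + \gamma\,\Gamma R^+\cdot B$ and $R^- = \gamma B - \gamma B\cdot\Gamma R^-$. Multiplying the first by $\gamma^{-1}R^-$ on the right and the second by $\gamma^{-1}\Gamma R^+$ on the left (or an equivalent manipulation), and then using $R^+ = (\cdot,g)r_\gamma$, $R^- = (\cdot,r_\gamma)g$ with $r_\gamma = M_{\psi_\gamma}g$, one reduces the desired identity $R^+ - R^- = \Gamma R^+\cdot R^- + R^-\cdot\Gamma R^+$ to a scalar identity for the function $\psi_\gamma$. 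Concretely, every operator in sight is of the form $(\cdot, h_1) h_2$ or a product of such, and the products telescope through inner products $(r_\gamma, g)$, $(g, r_\gamma)$, etc.; after substituting the formulas (\ref{GmRpl}), (\ref{GmRmn}) for $\Gamma R^\pm$ and using $|g|^2\tilde\rho = \tilde\rho$, the whole identity collapses to the defining relation $\psi_\gamma(1 + 2\pi\mathrm{i}\gamma P_+\tilde\rho) = 1$ from (\ref{dfps}). That relation is exactly (\ref{eqrpl})–(\ref{eqrmn}), so it holds by construction.

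An alternative, perhaps cleaner, route that I would keep in reserve: once one knows $U_\gamma^+U_\gamma^- = I + \Gamma S$ for some $S \in \Dm(\Gamma)$, observe that $U_\gamma^+ U_\gamma^-$ satisfies a commutator equation with $A$ obtained by combining (\ref{eqpl}) and (\ref{eqmn}) — namely $A(U^+U^-) = U^+A_\gamma U^- = (U^+U^-)A$, so $U_\gamma^+U_\gamma^-$ commutes with $A$. Thus $\Gamma S$ commutes with $A$; but $\Gamma S$ is by construction a particular solution of $[A,Z] = S$ that is "small" (order $\gamma^2$), and one argues that the only such operator commuting with $A$ and having this explicit integral-operator form is $0$. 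This requires a short uniqueness argument for the relevant piece of the equation, which is why I would prefer the direct scalar computation as the primary argument.

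The main obstacle I anticipate is purely bookkeeping: correctly tracking the adjoints (recall $J^\star = M_{\tilde\rho}$ was used above), the various multiplication operators $M_{r_\gamma}$, $M_{\bar r_\gamma \tilde\rho}$, and the Riesz projection $P_+$ as they get composed, and making sure the reduction to the scalar identity for $\psi_\gamma$ is carried out without sign or factor-of-$2\pi\mathrm{i}$ errors. There is no conceptual difficulty once Lemma~\ref{prpGm} is in hand — that lemma (and behind it Lemma~\ref{prpGmep}) is where all the real work lies; the present proposition is the algebraic payoff. One should also remember to restrict to $\gamma$ small enough that $\psi_\gamma \in L_\infty(\R)$, so that all operators involved are bounded and the manipulations are legitimate; this smallness has already been quantified in the proof of Proposition~\ref{solRplmn}.
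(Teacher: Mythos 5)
Your opening reduction is the same as the paper's: expand $U_\gamma^+U_\gamma^-=(I+\Gamma R^+)(I-\Gamma R^-)$ and absorb the product $\Gamma R^+\cdot\Gamma R^-$ via Lemma~\ref{prpGm}, so that everything sits inside a single application of $\Gamma$. Where you diverge is in how you kill the resulting argument of $\Gamma$. You propose to verify $R^+-R^-=\Gamma R^+\cdot R^-+R^+\cdot\Gamma R^-$ by brute force, substituting the rank-one formulas and reducing to the scalar relation $\psi_\gamma(1+2\pi\mathrm{i}\gamma P_+\tilde\rho)=1$. That computation does close (the terms collapse exactly as you predict), but the paper does something much lighter: it recognizes the argument of $\Gamma$ as $R^+U_\gamma^--U_\gamma^+R^-$, and then the fixed-point equations (\ref{eqRpl}), (\ref{eqRmn}) give $R^+=\gamma U_\gamma^+B$ and $R^-=\gamma BU_\gamma^-$, whence $R^+U_\gamma^-=\gamma U_\gamma^+BU_\gamma^-=U_\gamma^+R^-$ and the argument vanishes identically, with no kernel bookkeeping at all. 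So for $U_\gamma^+U_\gamma^-=I$ your route is correct but needlessly computational.

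The genuine gap is the sentence ``The computation of $U_\gamma^-U_\gamma^+$ is entirely symmetric.'' It is not. First, Lemma~\ref{prpGm} is stated (and its proof carried out, with explicit kernels) only for the ordered product $\Gamma R^+\cdot\Gamma R^-$; for $U_\gamma^-U_\gamma^+$ you need the reversed product $\Gamma R^-\cdot\Gamma R^+$, which requires rerunning the limit argument with the factors swapped. Second, and more seriously, the identity you would then have to verify is $R^+-R^-=\Gamma R^-\cdot R^++R^-\cdot\Gamma R^+$, and this does \emph{not} reduce to the defining relation of $\psi_\gamma$ the way the first one does: the algebraic shortcut now yields $U_\gamma^-R^+-R^-U_\gamma^+=\gamma(U_\gamma^-U_\gamma^+B-BU_\gamma^-U_\gamma^+)$, i.e.\ it vanishes only if $U_\gamma^-U_\gamma^+$ commutes with $B$ --- which is exactly what you are trying to prove, so the argument is circular; and the direct kernel computation now produces terms involving $P_+(|\psi_\gamma|^2\tilde\rho)$ that do not telescope through $(1+2\pi\mathrm{i}\gamma P_+\tilde\rho)\psi_\gamma=1$. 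The paper sidesteps all of this: having established that $U_\gamma^-$ is a \emph{right} inverse, it observes from the explicit formula that $\|\Gamma R^+\|<1$ for small $\gamma$, so $U_\gamma^+=I+\Gamma R^+$ is invertible by a Neumann series, and a right inverse of an invertible operator is automatically the two-sided inverse. You should replace the ``symmetric'' claim with this one-line invertibility argument; without it your proof of $U_\gamma^-U_\gamma^+=I$ does not go through as written.
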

\begin{proof}
Using Lemma \ref{prpGm} and (\ref{eqUplmn}), we have:
\begin{eqnarray}
&&U_\gamma^+U_\gamma^--I=(I+\Gamma R^+)(I-\Gamma R^-)-I=
\Gamma(R^+-R^-)-\Gamma R^+\cdot\Gamma R^-=
\nonumber\\
&&\Gamma(R^+-R^--\Gamma R^+\cdot R^--R^+\cdot\Gamma R^-)=
\Gamma(R^+U_\gamma^--U_\gamma^+R^-).
\nonumber
\end{eqnarray}
But, in view of (\ref{eqRpl}) and (\ref{eqRmn}),
$R^+U_\gamma^-=U_\gamma^+BU_\gamma^-=U_\gamma^+R^-$.
Hence we have proved that $U_\gamma^+U_\gamma^-=I$, that is
$U_\gamma^-$ is a right inverse of the operator $U_\gamma^+$.
On the other hand, we obtain from (\ref{GmRpl}) and (\ref{rpleqrmn})
that
$$
\Gamma R^+=-2\pi\ir\gamma JM_{\psi_\gamma\cdot g}
P_+M_{\bar g\tilde\rho}.
$$
where $\psi_\gamma$ is defined by (\ref{dfps}).
Then we see that $\Vert\Gamma R^+\Vert<1$ for a small enough $\gamma$,
hence the operator $U_\gamma^+=I+\Gamma R^+$ has a bounded inverse
$(U_\gamma^+)^{-1}$, which coincides with the right inverse
$U_\gamma^-$. This means that $U_\gamma^-U_\gamma^+=I$. The proposition
is proven.
\end{proof}

\subsection{Final results}

On the base of previous results we can obtain
criterion for HUE property for the pair $A,\;B$ in the case
of the rank-one perturbation $B$ of the multiplication operator $A$.
The following result is valid.

\begin{theorem}\label{tuneq}
If the spectral measure $\rho$ of the operator $A$ has the
BTB property, then there exists $\delta>0$ such that

\noindent (i) the family
of operators $U_\gamma^+$, defined by (\ref{frmUpl}), depends
holomorphically on $\gamma$ in the open disk
$D_\delta=\{\lambda\in\Cc\;\vert\;|\lambda|<\delta\}$;

\noindent (ii) for each fixed $\gamma\in(-\delta,\delta)$ the operator $U_\gamma^+$
is unitary and it establishes a unitary equivalence between $A$
and $A_\gamma$, that is
\begin{equation}\label{uniteq}
A_\gamma=(U_\gamma^+)^{-1}AU_\gamma^+.
\end{equation}
In other words, the pair of operators $A,\;B$ has the HUE property.
\end{theorem}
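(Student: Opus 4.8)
The plan is to read the statement off the apparatus assembled in Subsections 4.1--4.4: Proposition \ref{solRplmn} supplies the solutions $R^+,R^-$ of the basic equations, Proposition \ref{prGmR} places them in $\Dm(\Gamma)$, and Proposition \ref{invUpl} makes $U_\gamma^+$ invertible with inverse $U_\gamma^-$. First I would fix $\delta>0$ small enough that the following all hold for $|\gamma|<\delta$: the function $\psi_\gamma$ of (\ref{dfps}) is well defined as an element of $L_\infty(\R)$ (this requires $2\pi\delta\,\|P_+\tilde\rho\|_{L_\infty(\R)}<1$, which is legitimate since $P_+\tilde\rho\in L_\infty(\R)$ by the BTB property, see the proof of Proposition \ref{solRplmn}); the operators $R^+,R^-$ of (\ref{solRpl})--(\ref{solRmn}) satisfy condition (B) and solve (\ref{eqRpl})--(\ref{eqRmn}); $\|\Gamma R^+\|_{\Bc(\B)}<1$; and the identities $U_\gamma^+U_\gamma^-=U_\gamma^-U_\gamma^+=I$ of Proposition \ref{invUpl} are valid.

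For (i) I would expand $\psi_\gamma=\sum_{k\ge 0}(-2\pi\mathrm{i}\gamma)^k(P_+\tilde\rho)^k$; this Neumann series converges in the Banach algebra $L_\infty(\R)$, uniformly on compact subsets of $D_\delta$, so $\gamma\mapsto\psi_\gamma$ is a holomorphic $L_\infty(\R)$-valued function. Since the map $L_\infty(\R)\ni u\mapsto JM_{u\cdot g}\in\Bc(L_2(\R),\B)$ is bounded and linear, while $P_+$ and $M_{\bar g\tilde\rho}$ are fixed bounded operators, the right-hand side of (\ref{frmUpl}), namely $U_\gamma^+=I-2\pi\mathrm{i}\gamma\,JM_{\psi_\gamma\cdot g}P_+M_{\bar g\tilde\rho}$, is a holomorphic $\Bc(\B)$-valued function on $D_\delta$.

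For (ii), fix $\gamma$ real in $(-\delta,\delta)$. The intertwining: since $R^+$ satisfies (B) it lies in $\Dm(\Gamma)$ by Proposition \ref{prGmR}, so $[A,\Gamma R^+]=R^+$; combining this with (\ref{eqRpl}), i.e. $R^+=\gamma(I+\Gamma R^+)B=\gamma U_\gamma^+B$, gives $[A,U_\gamma^+]=\gamma U_\gamma^+B$, which is exactly (\ref{eqpl1}), hence $AU_\gamma^+=U_\gamma^+(A+\gamma B)=U_\gamma^+A_\gamma$. By Proposition \ref{invUpl}, $U_\gamma^+$ is boundedly invertible with $(U_\gamma^+)^{-1}=U_\gamma^-$, so $A_\gamma=(U_\gamma^+)^{-1}AU_\gamma^+$, which is (\ref{uniteq}); these steps in fact go through for every $\gamma\in D_\delta$. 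For the unitarity it then suffices to prove $(U_\gamma^+)^\star=U_\gamma^-$ for real $\gamma$, because then $(U_\gamma^+)^\star U_\gamma^+=U_\gamma^-U_\gamma^+=I$ and $U_\gamma^+(U_\gamma^+)^\star=U_\gamma^+U_\gamma^-=I$. This I would obtain by taking the adjoint of (\ref{frmUpl}), using $P_+^\star=P_+$, $J^\star=M_{\tilde\rho}$ (recall $\B=L_2(\R,\rho)$), $M_\phi^\star=M_{\bar\phi}$, the identity $M_{\bar g\tilde\rho}^\star=M_g$ read as a map $L_2(\R)\to\B$, and $\overline{2\pi\mathrm{i}\gamma}=-2\pi\mathrm{i}\gamma$; after invoking $|g|^2\tilde\rho=\tilde\rho$ (since $g=1$ $\rho$-a.e.) the result coincides with the right-hand side of (\ref{frmUmn}), that is, with $U_\gamma^-$.

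The step I expect to demand the most care is this last adjoint identity: one must keep precise track of which of $J$, the various multiplication operators, and $P_+$ act in $L_2(\R)$, which act in $\B=L_2(\R,\rho)$, and which act between the two, and compute their adjoints accordingly, before the formulas (\ref{frmUpl}) and (\ref{frmUmn}) can be matched. Everything else is an orchestration of results already in hand, so once $(U_\gamma^+)^\star=U_\gamma^-$ is established, assertions (i) and (ii)---and with them the HUE property of the pair $A,\;B$ in the sense of Definition \ref{dfhlunt}---follow.
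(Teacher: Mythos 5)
Your proposal is correct and follows essentially the same route as the paper: holomorphy of $\gamma\mapsto U_\gamma^+$ read off from (\ref{frmUpl}) and (\ref{dfps}), the identity $(U_\gamma^+)^\star=U_\gamma^-$ obtained by comparing the adjoint of (\ref{frmUpl}) with (\ref{frmUmn}), invertibility from Proposition \ref{invUpl}, and the intertwining relation (\ref{eqpl}) from the Friedrichs construction. You merely spell out the details (Neumann series for $\psi_\gamma$, the bookkeeping of adjoints between $L_2(\R)$ and $\B$) that the paper leaves implicit.
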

\begin{proof}
Assertion (i) follows immediately from (\ref{frmUpl}) and (\ref{dfps}).
Let us prove assertion (ii). We see from (\ref{frmUpl}),
(\ref{frmUmn}) and (\ref{dfps})
that for a small enough $\gamma\in\R$ $(U_\gamma^+)^\star=U_\gamma^-$.
On the other hand, by Proposition \ref{invUpl}, for a small enough
$\gamma$
\begin{equation}\label{Umn}
U_\gamma^-=(U_\gamma^+)^{-1}.
\end{equation}
These circumstances mean that for a small enough $\gamma\in\R$ the
operator $U_\gamma^+$ is unitary. In view of (\ref{eqpl}) and
(\ref{Umn}), the formula (\ref{uniteq}) is valid. The theorem is proven.
\end{proof}

\begin{corollary}\label{cntfnct}
If the condition of Theorem \ref{tuneq} is satisfied, then for any
function $\phi\in C[-M,M]$ and for a small enough $\gamma$
the representation is valid:
\begin{eqnarray}\label{rprphiAgm}
&&\phi(A_\gamma)=U_\gamma^-M_{\phi_0}U_\gamma^+=
\left(I+2\pi\mathrm{i}\gamma JM_gP_+
M_{\bar\psi_\gamma\cdot\bar g\cdot\tilde\rho}\right)M_{\phi_0}\times
\nonumber\\
&&\left(I-2\pi\mathrm{i}\gamma JM_{\psi_\gamma\cdot g}P_+
M_{\bar g\cdot\tilde\rho}\right),
\end{eqnarray}
where $\psi_\gamma$ is defined by (\ref{dfps}) and $\phi_0$ is
the continuation of the function $\phi$ by zero out of the interval
$[-M,M]$. Recall that $M>0$ is such a number that
$\p(\rho)\subset(-M,M)$.
\end{corollary}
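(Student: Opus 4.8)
The plan is to obtain (\ref{rprphiAgm}) as a direct consequence of Theorem~\ref{tuneq} together with the fact, already used in the proof of Proposition~\ref{pHUEimpl}, that the continuous functional calculus is transported by unitary equivalence. I would fix $\delta>0$ as in Theorem~\ref{tuneq} and restrict to real $\gamma\in(-\delta,\delta)$ small enough that $U_\gamma^+$ is unitary with $(U_\gamma^+)^{-1}=U_\gamma^-$; this is precisely assertion~(ii) of that theorem combined with Proposition~\ref{invUpl}. Since then $A_\gamma=(U_\gamma^+)^{-1}AU_\gamma^+=(U_\gamma^+)^\star AU_\gamma^+$ is unitarily equivalent to the self-adjoint operator $A$, its spectral measure is $U_\gamma^-E_A(\cdot)U_\gamma^+$, where $E_A$ is the spectral measure of $A$; hence for every $\phi\in C[-M,M]$, which is defined on a neighborhood of $\sigma(A_\gamma)=\sigma(A)=\p(\rho)\subset(-M,M)$, one gets
\begin{equation*}
\phi(A_\gamma)=U_\gamma^-\,\phi(A)\,U_\gamma^+ .
\end{equation*}

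Next I would identify $\phi(A)$. As $A$ is the operator of multiplication by $\mu$ in $\B=L_2(\R,\rho)$, the continuous functional calculus gives $\phi(A)=M_\phi$, the operator of multiplication by $\phi$ in $\B$; and because $\p(\rho)\subset[-M,M]$, the values of $\phi$ outside $[-M,M]$ are immaterial $\rho$-almost everywhere, so $M_\phi=M_{\phi_0}$ as a bounded operator in $\B$, where $\phi_0\in L_\infty(\R)$ is the extension of $\phi$ by zero (boundedness is clear since $\phi_0$ agrees with $\phi$ on $\p(\rho)$). Substituting $\phi(A)=M_{\phi_0}$ and the explicit expressions (\ref{frmUpl}) for $U_\gamma^+$ and (\ref{frmUmn}) for $U_\gamma^-$ into the displayed identity yields exactly the two-line representation (\ref{rprphiAgm}).

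The argument is essentially bookkeeping and I do not expect a genuine obstacle: the only points to check are that $M_{\phi_0}$ sits between $U_\gamma^-$ and $U_\gamma^+$ as a legitimate bounded operator of $\B$, and that, in the expanded form of (\ref{rprphiAgm}), the embedding $J$, the multiplication operators $M_{\psi_\gamma g}$, $M_{\bar g\tilde\rho}$, $M_{\bar\psi_\gamma\bar g\tilde\rho}$, $M_g$, and the Riesz projection $P_+$ compose on the correct spaces $L_2(\R)$ and $\B$ — which is precisely the content of the corollary following Lemma~\ref{lGmep} (recall $\tilde\rho\in L_\infty(\R)$ and $\psi_\gamma\in L_\infty(\R)$ for small $\gamma$ by Proposition~\ref{solRplmn} and Remark~\ref{rBrtr}). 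Finally, should one want (\ref{rprphiAgm}) for complex $\gamma\in D_\delta$ as well, the right-hand side is holomorphic in $\gamma$ there by Theorem~\ref{tuneq}(i) together with (\ref{dfps}), hence it coincides with the holomorphic continuation $\Phi(\gamma)$ of $\phi(A_\gamma)$ on $D_\delta$ by the principle of analytic continuation.
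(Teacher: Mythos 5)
Your argument is correct and follows the paper's own route: identify $\phi(A)=M_{\phi_0}$ for the multiplication operator, transport the functional calculus through the unitary equivalence (\ref{uniteq}) to get $\phi(A_\gamma)=U_\gamma^-M_{\phi_0}U_\gamma^+$, and substitute the explicit formulas (\ref{frmUpl}), (\ref{frmUmn}). The additional remarks on operator domains and on extending the identity to complex $\gamma$ by analytic continuation are harmless elaborations of the same proof.
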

\begin{proof}
Since $A$ is the multiplication operator by the independent variable
in the space $\B=L_2(\R,\rho)$, then $\phi(A)=M_{\phi_0}$. Hence
we have from (\ref{uniteq}) that
$\phi(A_\gamma)=(U_\gamma^+)^{-1}M_{\phi_0}U_\gamma^+$
for a small enough $\gamma$. Then, taking
into account (\ref{frmUpl}), (\ref{frmUmn}) and (\ref{Umn}), we obtain
the desired equality (\ref{rprphiAgm}).
\end{proof}

As a consequence of Theorem \ref{tuneq} and Corollary \ref{cntfnct}
we obtain the following strengthening of Theorem 2.4 from \cite{Ar-Z1}:

\begin{corollary}\label{crholom}
If the spectral measure $\rho$ of the operator $A$ has the
BTB property, then

\noindent (i) the pair $A,\;B$ has the FH property;

\noindent (ii) the formula is valid:
\begin{equation}\label{frmdf}
\frac{\partial}{\partial\gamma}\phi(A_\gamma)\vert_{\gamma=0}=
\pi\ir M_g[H,M_{\phi_0}]
M_{\bar g\cdot\tilde\rho},
\end{equation}
where $H$ is the Hilbert transform in $L_2(\R)$:
$H=P_+-P_-\;(P_-=I-P_+)$.
\end{corollary}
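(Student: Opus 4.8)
The plan is to derive Corollary~\ref{crholom} directly from Theorem~\ref{tuneq} and Corollary~\ref{cntfnct}. For part~(i), the point is that the FH property asks exactly for a holomorphic continuation of $\phi(A_\gamma)$ from $(-\delta,\delta)$ to $D_\delta$ for every $\phi\in C[-M,M]$ (after extending by zero, since $\p(\rho)\subset(-M,M)$). Corollary~\ref{cntfnct} gives the explicit factorization $\phi(A_\gamma)=U_\gamma^-M_{\phi_0}U_\gamma^+$ valid for small real $\gamma$, and Theorem~\ref{tuneq}(i) says $U_\gamma^+$ depends holomorphically on $\gamma$ in $D_\delta$. So I would first observe that $U_\gamma^-$, as given by (\ref{frmUmn}), is likewise holomorphic in $\gamma$ on a possibly smaller disk: indeed $\bar\psi_\gamma$ in (\ref{frmUmn}) must be replaced by the holomorphic-in-$\gamma$ function $(1+2\pi\mathrm{i}\gamma P_+\tilde\rho)^{-1}$ evaluated without conjugation in $\gamma$, which agrees with $\bar\psi_\gamma$ on the real axis, so the right-hand side of (\ref{rprphiAgm}), reinterpreted with this holomorphic dependence, furnishes the sought continuation $\Phi(\gamma)$; the middle factor $M_{\phi_0}$ is $\gamma$-independent, so the product of three operator functions, two holomorphic and one constant, is holomorphic. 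This holomorphic extension agrees with $\phi(A_\gamma)$ on $(-\delta,\delta)$, which is precisely the FH property.

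For part~(ii), the idea is to differentiate the factorization at $\gamma=0$. From (\ref{frmUpl}) and (\ref{dfps}) we have $\psi_0\equiv 1$, hence $U_0^+=U_0^-=I$ and
\begin{equation*}
\frac{\partial}{\partial\gamma}U_\gamma^+\Big|_{\gamma=0}=-2\pi\mathrm{i}JM_gP_+M_{\bar g\tilde\rho},\qquad
\frac{\partial}{\partial\gamma}U_\gamma^-\Big|_{\gamma=0}=2\pi\mathrm{i}JM_gP_+M_{\bar g\tilde\rho},
\end{equation*}
since the $\gamma$-dependence hidden in $\psi_\gamma$ contributes only at second order (its derivative at $0$ multiplies the already-present factor $\gamma$). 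Applying the Leibniz rule to $\Phi(\gamma)=U_\gamma^-M_{\phi_0}U_\gamma^+$ and using $U_0^\pm=I$ gives
\begin{equation*}
\frac{\partial}{\partial\gamma}\phi(A_\gamma)\Big|_{\gamma=0}
=\Big(\tfrac{\partial}{\partial\gamma}U_\gamma^-\big|_{0}\Big)M_{\phi_0}+M_{\phi_0}\Big(\tfrac{\partial}{\partial\gamma}U_\gamma^+\big|_{0}\Big)
=2\pi\mathrm{i}JM_gP_+M_{\bar g\tilde\rho}M_{\phi_0}-2\pi\mathrm{i}M_{\phi_0}JM_gP_+M_{\bar g\tilde\rho}.
\end{equation*}
It then remains to massage this into the commutator form (\ref{frmdf}). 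Here I would use that $J$ is the embedding $L_2(\R)\to\B$ with $J^\star=M_{\tilde\rho}$, that $M_g$ with $g\equiv 1$ on $\p(\rho)$ acts as identity there, and the algebraic identity $2\pi\mathrm{i}P_+=\pi\mathrm{i}(I+H)$ where $H=P_+-P_-$; the identity part contributes a term that commutes with $M_{\phi_0}$ and hence cancels in the difference, leaving precisely $\pi\mathrm{i}M_g[H,M_{\phi_0}]M_{\bar g\tilde\rho}$.

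The main obstacle, and the step deserving the most care, is the bookkeeping in this last reduction: one must track how the multiplication operators $M_g$, $M_{\bar g\tilde\rho}$, $M_{\phi_0}$ and the embedding $J$ compose on the two different Hilbert spaces $L_2(\R)$ and $\B=L_2(\R,\rho)$, verify that the ``$I$-part'' of $2\pi\mathrm{i}P_+$ genuinely produces two terms that are mutually cancelling (this uses $M_{\phi_0}$ commuting with the multiplication operators and $P_+\mapsto I$ acting diagonally), and confirm that the remaining $H$-part reassembles with the correct sign. One should also note that the derivative computations are legitimate because the relevant operator functions are holomorphic (norm-convergent difference quotients), so term-by-term differentiation of (\ref{rprphiAgm}) is justified. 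Once the algebra is in order, (\ref{frmdf}) follows, completing the proof.
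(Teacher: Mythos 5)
Your proposal is correct and follows essentially the same route as the paper: part (i) is the implication HUE $\Rightarrow$ FH (the paper cites Proposition \ref{pHUEimpl}, whose proof is exactly your holomorphic reinterpretation of the factorization $\phi(A_\gamma)=U_\gamma^-M_{\phi_0}U_\gamma^+$, including the replacement of $\bar\psi_\gamma$ by its holomorphic continuation), and part (ii) is the first-order expansion of \eqref{rprphiAgm} in $\gamma$ combined with $P_+=\tfrac{I+H}{2}$, which is precisely your Leibniz computation. Your bookkeeping of the derivative of $\psi_\gamma$ contributing only at second order and of the cancellation of the $I$-part of $2\pi\mathrm{i}P_+$ is accurate and matches the paper's (terser) argument.
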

\begin{proof}
Assertion (i) follows from Theorem \ref{tuneq} and Proposition
\ref{pHUEimpl}. Let us prove assertion (ii). We have from
(\ref{rprphiAgm}) and (\ref{dfps}):
$$
\phi(A_\gamma)-\phi(A)=2\pi\ir\gamma M_g(P_+M_{\phi_0}-M_{\phi_0}P_+)
M_{\bar g\cdot\tilde\rho}+O(\gamma^2).
$$
The latter representation and the formula $P_+=\frac{I+H}{2}$ imply
the desired formula (\ref{frmdf}).
\end{proof}

 The following assertion connects the CLI property
with BTB property in the case of the rank-one perturbation $B$ of
the multiplication operator $A$:

\begin{theorem}\label{tHrd}
If the pair of operators $A,\;B$, defined above, has the CLI
property, then the spectral measure $\rho$ of the operator $A$ has
the BTB property.
\end{theorem}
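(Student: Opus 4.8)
The plan is to prove the contrapositive in disguise: assuming the CLI property, I will exploit the freedom to move $\gamma$ into the complex plane and show that for nonreal $\gamma$ the eigenvalue equation for $A_\gamma$ forces the Borel transform $\mathrm B\rho$ to be bounded off the real axis. The starting point is the explicit rank-one structure. For $\gamma\in\Cc$ and $\lambda\notin\sigma(A)=\p(\rho)$, a vector $f\in\B$ satisfies $A_\gamma f=\lambda f$ precisely when $(\mu-\lambda)f(\mu)+\gamma(f,g)g(\mu)=0$ $\rho$-a.e., i.e. $f(\mu)=-\gamma(f,g)(\mu-\lambda)^{-1}$ (using $g\equiv 1$). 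Pairing this with $g$ and cancelling $(f,g)\neq 0$ yields the familiar Friedrichs--Livšic characteristic equation
\begin{equation}\label{chareq}
1+\gamma\int_\R\frac{\rho(\mathrm d\mu)}{\mu-\lambda}=1+\gamma\,\mathrm B\rho(\lambda)=0,\qquad\text{equivalently}\qquad \mathrm B\rho(\lambda)=-\frac{1}{\gamma}.
\end{equation}
Thus $\lambda\in\sigma(A_\gamma)\setminus\sigma(A)$ for some $\gamma$ iff $\mathrm B\rho$ takes the value $-1/\gamma$ at a point off $\p(\rho)$.

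Now I invoke the CLI property: there is $\delta>0$ with $\sigma(A_\gamma)=\sigma(A)$ for all $\gamma\in D_\delta$, in particular for every $\gamma\in D_\delta\setminus\{0\}$ the equation \eqref{chareq} has \emph{no} solution $\lambda$ in the open upper (or lower) half-plane, since such a $\lambda$ would lie in $\sigma(A_\gamma)\setminus\sigma(A)$ (recall $\p(\rho)\subset\R$, so nonreal points are automatically outside $\sigma(A)$). Equivalently, the holomorphic function $\mathrm B\rho$ on the upper half-plane $\Cc^+$ omits every value of the form $-1/\gamma$ with $0<|\gamma|<\delta$, i.e. it omits the entire region $\{w\in\Cc\ :\ |w|>1/\delta\}$. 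Hence $|\mathrm B\rho(\lambda)|\le 1/\delta$ for all $\lambda\in\Cc^+$, which is exactly the statement that $\mathrm B\rho\in H_\infty^+$, i.e. the BTB property. The same argument in $\Cc^-$ gives the bound there (or one simply uses $\mathrm B\rho(\bar\lambda)=\overline{\mathrm B\rho(\lambda)}$ since $\rho$ is a positive real measure). By Remark \ref{rBrtr} this in turn guarantees $\rho$ is absolutely continuous with $\tilde\rho\in L_\infty(\R)$, so the conclusion of the theorem holds in full.

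The step I expect to require the most care is the passage from "CLI holds" to "\eqref{chareq} has no nonreal root for small $\gamma$". One must verify that a nonreal root $\lambda$ of \eqref{chareq} genuinely produces a point of $\sigma(A_\gamma)$ — i.e. that the candidate eigenvector $f(\mu)=(\mu-\lambda)^{-1}$ is a bona fide element of $\B=L_2(\R,\rho)$, which is immediate because $\lambda\notin\R\supseteq\p(\rho)$ makes $(\mu-\lambda)^{-1}$ bounded on $\p(\rho)$, and that conversely no spurious cancellation can occur (the case $(f,g)=0$ is impossible for nonreal $\lambda$ since then $f(\mu)=0$ $\rho$-a.e.). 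One should also note that $D_\delta$ being a full complex disk is essential: it is the availability of purely imaginary and otherwise nonreal $\gamma$ that pins down the omitted region and hence the $H_\infty$ bound; real $\gamma$ alone would only control real spectrum and give nothing. A final small point is to make the quantitative bound explicit: the supremum of $|\mathrm B\rho|$ on each half-plane is at most $1/\delta$, with $\delta$ the radius furnished by the CLI hypothesis.
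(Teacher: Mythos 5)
Your proof is correct and follows essentially the same route as the paper: both rest on the characteristic equation $1+\gamma\,\mathrm B\rho(\lambda)=0$ for eigenvalues $\lambda\notin\sigma(A)$ (which the paper cites from Arazy--Zelenko rather than rederiving) and then observe that nonreal roots for small complex $\gamma$ would violate CLI. The only difference is presentational: the paper argues by contradiction with a sequence $\lambda_n$ along which $\mathrm B\rho\to\infty$, whereas you run the implication directly and extract the explicit bound $|\mathrm B\rho|\le 1/\delta$ on each half-plane.
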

\begin{proof}
As was shown in \cite{Ar-Z}, a number
$\lambda\notin\sigma(A)$ is an eigenvalue of the
operator $A_\gamma$ if and only if it is a root of the equation
\begin{equation}\label{roots}
1+\gamma \mathrm{B}\rho(\lambda)=0.
\end{equation}
Recall that $\mathrm{B}\rho$ is the Borel transform of the spectral
measure $\rho$ of the operator $A$, defined by \eqref{dfBortr}. Assume, on the contrary, that
the measure $\rho$ does not have the
BTB property. This means that the function $\mathrm{B}\rho$
does not belong to the Hardy class $H_\infty$. Then there
exists such a sequence
$\lambda_n\in\Cc$ that $\Im(\lambda_n)>0$ and
$$
\lim_{n\rightarrow\infty}\mathrm{B}\rho(\lambda_n)=\infty.
$$
Denote $\gamma_n=-(\mathrm{B}\rho(\lambda_n))^{-1}$.
Then $\lim_{n\rightarrow\infty}\gamma_n=0$ and each $\lambda_n$ is a
root of the equation (\ref{roots}) with $\gamma=\gamma_n$.  Since
$\Im(\lambda_n)>0$, then $\lambda_n\notin\sigma(A)$, hence
each $\lambda_n$ is an eigenvalue of the operator $A_{\gamma_n}$.
 These circumstances contradict the CLI property of the pair $A,\;B$.
The theorem is proven.
\end{proof}

Proposition \ref{pHUEimpl} and Theorems \ref{tFHLSC}, \ref{tuneq}
and \ref{tHrd} imply the
following final result in the case of the rank-one perturbation $B$ of the
multiplication operator $A$:

\begin{theorem}\label{tfin}
If $A,\;B$ are the operators defined above, then the following
assertions are equivalent to each other:

\noindent (i) The pair $A,\;B$ has the HUE property;

\noindent (ii) The pair $A,\;B$ has the FH property;

\noindent (iii) The pair $A,\;B$ has the CLI property;

\noindent (iv) The spectral measure
$\rho$ of the operator $A$ has the BTB property.
\end{theorem}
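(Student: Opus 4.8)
The plan is to assemble Theorem \ref{tfin} purely by chaining together the results already established in the excerpt, since each implication among (i)--(iv) has essentially been packaged as a separate statement. First I would prove the cycle of implications. The implication (i) $\Rightarrow$ (ii) is immediate from Proposition \ref{pHUEimpl}, which asserts that the HUE property implies the FH property (and also the CLI property, which I will reuse). The implication (ii) $\Rightarrow$ (iii) follows from Theorem \ref{tFHLSC}: the operators $A,\;B$ here are bounded and self-adjoint and $B=(\cdot,g)g$ is rank-one, hence compact, so the hypotheses of that theorem are met and the FH property forces the CLI property. The implication (iii) $\Rightarrow$ (iv) is exactly the content of Theorem \ref{tHrd}, which says that the CLI property of the pair $A,\;B$ (in the rank-one setting) implies the BTB property of the spectral measure $\rho$. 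Finally the implication (iv) $\Rightarrow$ (i) is Theorem \ref{tuneq}: the BTB property of $\rho$ yields a holomorphic family $U_\gamma^+$ of unitaries implementing the unitary equivalence between $A$ and $A_\gamma$, i.e.\ the HUE property.

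Having closed the loop (i) $\Rightarrow$ (ii) $\Rightarrow$ (iii) $\Rightarrow$ (iv) $\Rightarrow$ (i), I would conclude that all four assertions are equivalent. Concretely: starting from any one of the four, one traverses the cycle to reach any other. I would write this out as a single short paragraph rather than belaboring each bidirectional equivalence separately, since a four-term cyclic chain of implications already delivers full equivalence.

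I do not expect any genuine obstacle here, because all the analytic work --- the Friedrichs construction of $U^\pm$, the passage to the limit $\epsilon \downarrow 0$ via the strong convergence $P_{+,\epsilon} \to P_+$, the verification that $U_\gamma^+ U_\gamma^- = U_\gamma^- U_\gamma^+ = I$, and the spectral argument identifying eigenvalues of $A_\gamma$ with roots of $1 + \gamma\,\mathrm{B}\rho(\lambda) = 0$ --- has already been carried out in Propositions \ref{solRplmn}, \ref{invUpl} and Theorems \ref{tuneq}, \ref{tHrd}. The only point requiring a word of care is checking that the rank-one operator $B$ is compact so that Theorem \ref{tFHLSC} applies; this is trivial since $B$ has one-dimensional range. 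If anything is mildly delicate it is simply bookkeeping: making sure the ``small enough $\gamma$'' / ``$\delta > 0$'' quantifiers in the various statements are compatible when composing them around the cycle, but since each step only shrinks the relevant disk or interval this causes no difficulty.

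\begin{proof}
The implication (i) $\Rightarrow$ (ii) is a special case of Proposition \ref{pHUEimpl}. Since $B=(\cdot,g)g$ is a rank-one operator, it is compact; hence the pair $A,\;B$ of bounded self-adjoint operators satisfies the hypotheses of Theorem \ref{tFHLSC}, which gives the implication (ii) $\Rightarrow$ (iii). The implication (iii) $\Rightarrow$ (iv) is Theorem \ref{tHrd}. Finally, Theorem \ref{tuneq} asserts that the BTB property of $\rho$ implies the HUE property of the pair $A,\;B$, that is, (iv) $\Rightarrow$ (i). Thus
$$
\mathrm{(i)}\Rightarrow\mathrm{(ii)}\Rightarrow\mathrm{(iii)}\Rightarrow\mathrm{(iv)}\Rightarrow\mathrm{(i)},
$$
so all four assertions are equivalent. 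The theorem is proven.
\end{proof}
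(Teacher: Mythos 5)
Your proof is correct and follows exactly the route the paper intends: the paper states Theorem \ref{tfin} as an immediate consequence of Proposition \ref{pHUEimpl} and Theorems \ref{tFHLSC}, \ref{tuneq} and \ref{tHrd}, which is precisely the cyclic chain (i)$\Rightarrow$(ii)$\Rightarrow$(iii)$\Rightarrow$(iv)$\Rightarrow$(i) you assemble. Your added remarks on the compactness of the rank-one $B$ and the compatibility of the ``small enough $\gamma$'' quantifiers are sound and slightly more explicit than the paper itself.
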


\end{document}